\newtheorem{theorem}{Theorem}[section]
\newtheorem{corollary}{Corollary}[section]
\newtheorem{remark}{Remark}[section]
\makeatletter \@addtoreset{equation}{section} \makeatother
\begin{document}
\title{A conforming DG method for linear nonlocal models with integrable kernels}
\date{}
 \author{Qiang Du\footnote{Department of Applied Physics and Applied Mathematics, Columbia University, NY 10027, USA (email: qd2125@columbia.edu)} \quad  and\quad  Xiaobo
Yin\footnote{Corresponding author, School of Mathematics and Statistics \& Hubei Key Laboratory of Mathematical Sciences, Central China Normal University, Wuhan 430079, China (email: yinxb@mail.ccnu.edu.cn)}}
\maketitle

 \begin{quote}
\begin{small}
{\bf Abstract.}\,\, Numerical solution of nonlocal constrained value problems with integrable kernels are considered. These nonlocal problems arise in nonlocal mechanics and nonlocal diffusion. The structure of  the true solution to the problem is analyzed first.  The analysis leads naturally to a new kind of discontinuous Galerkin method that efficiently solve the problem numerically. This method is shown to be asymptotically compatible. Moreover, it  has optimal convergence rate for one dimensional case under very weak assumptions, and almost optimal convergence rate for two dimensional case under mild assumptions.

{\bf Keywords.} nonlocal diffusion; peridynamic model; nonlocal model; integrable kernel; discontinuous Galerkin; finite element; convergence analysis; condition number

{\bf 2010 Mathematics Subject Classification} 82C21, 65R20, 74S05, 46N20, 45A05
\end{small}
\end{quote}

\section{Introduction}
Nonlocal models have generated much interests in recent years \cite{du2018icm}.
For example, the peridynamic (PD) model proposed by Silling \cite{silling2000reformulation} is an integral-type nonlocal theory of continuum mechanics which provides an alternative setup to classical continuum mechanics based on PDEs. Since PD avoids the explicit use of spatial derivatives, it is especially effective for modeling problems involving discontinuities or other singularities in the deformation \cite{askari2008peridynamics,du18je,kilic2010coupling,oterkus2012peridynamic,silling2010crack,silling2010peridynamic}.
The nonlocal diffusion (ND) model, described in \cite{du2012analysis} is  another example of integro-differential equations. More recently, mathematical analysis of nonlocal models is also paid more attention, which could be found in \cite{aksoylu2010results,aksoylu2011variational,andreu2010nonlocal,burch2011classical,du2019cbms,du2013nonlocal,du2011mathematical,emmrich2007well}. Meanwhile, to simulate nonlocal models, various numerical approximations have been proposed and studied, including finite difference, finite element, meshfree method, quadrature and particle-based methods \cite{bobaru2009convergence,chen2011continuous,du2013posteriori,kilic2010coupling,macek2007peridynamics,seleson2009peridynamics,silling2005meshfree,tian2013analysis,tian2014asymptotically,wang2012fast,zhou2010mathematical}.

Let $\Omega\subset\mathbb{R}^{d}$ denote a bounded, open and convex domain with Lipschitz continuous boundary. For $u({\bf x}): \Omega\rightarrow \mathbb{R}$, the nonlocal operator $\mathcal{L}$ on $u({\bf x})$ is defined as
\begin{equation}\label{Nolocal_Operator}
\mathcal{L}u({\bf x})=\int_{\mathbb{R}^{d}}{\big (}u({\bf y})-u({\bf x}){\big )}\gamma({\bf x},{\bf y})d{\bf y}\quad \forall {\bf x}\in\Omega,
\end{equation}
where the nonnegative symmetric mapping $\gamma({\bf x},{\bf y}): \mathbb{R}^{d}\times\mathbb{R}^{d} \rightarrow \mathbb{R}$ is called a kernel. The operator $\mathcal{L}$ is regarded nonlocal since the value of $\mathcal{L}u$ at a point $\bf x$ involves information about $u$ at points ${\bf y}\neq{\bf x}$. In this article, we consider the following nonlocal Dirichlet volume-constrained diffusion problem
\begin{equation}\label{nonlocal_diffusion}
     \left \{
     \begin{array}{rl}
-\mathcal{L}u({\bf x})&=b({\bf x})\: \mbox{on}\: \Omega, \\
u({\bf x})&=g({\bf x}) \: \mbox{on} \:\Omega_I,
     \end{array}
     \right .
\end{equation}
where $\Omega_I=\{{\bf y}\in \mathbb{R}^d\setminus\Omega:\, \mbox{dist}({\bf y},\partial \Omega)<\delta\}$ with $\delta$ a constant called horizon parameter, $b({\bf x})\in L^2(\Omega)$ and $g({\bf x})\in L^2(\Omega_I)$ are given functions. Volume constraints are natural extensions, to the nonlocal case, of
boundary conditions for differential equations. Nonlocal versions of Neumann and Robin boundary conditions are also naturally defined \cite{du2012analysis}.

We assume that the nonnegative symmetric kernel $\gamma({\bf x},{\bf y})$
satisfies, there exists a positive constant $\gamma_0$, for all ${\bf x}\in\Omega\cup\Omega_I$,
\begin{equation}\label{kernel_basic}
\gamma({\bf x},{\bf y})\geq \gamma_0\quad\forall {\bf y}\in B_{\delta/2}({\bf x}),\quad
\gamma({\bf x},{\bf y})=0\quad \forall {\bf y}\in(\Omega\cup\Omega_I)\setminus B_{\delta}({\bf x}),
\end{equation}
where $B_{\delta}({\bf x}) := \{{\bf y}\in \Omega\cup\Omega_I: |{\bf y}-{\bf x}|\leq\delta\}$. Obviously, (\ref{kernel_basic}) implies that although
interactions are nonlocal, they are limited to a ball of radius $\delta$.
A few important classes of kernels are considered in  \cite{du2012analysis}. Of particular interests here
is  a special choice of $\gamma$ being a radial function of
${\bf x}-{\bf y}$ (which also makes the kernel translation invariant).
Such a case has also been studied earlier in \cite{andreu2010nonlocal} where
\begin{equation}\label{int_kernel}
\gamma({\bf x},{\bf y})=\tilde{\gamma}(|{\bf y}-{\bf x}|)\geq 0, \quad
\int_{\mathbb{R}^{d}}\tilde{\gamma}(|{\bf z}|) d{\bf z} = 1.
\end{equation}
This condition on $\gamma$
implies that $\mathcal{L}$ is a bounded mapping from $L^2(\mathbb{R}^{d})$ to $L^2(\mathbb{R}^{d})$. As we will discuss here, even though for smooth enough $b({\bf x})$ in $\Omega$, unlike the classical local PDE boundary value problems, the solution $u({\bf x})$ is possibly discontinuous across $\partial\Omega$ which makes the numerical solution to the corresponding nonlocal problem challenging.

An intuitive idea to overcome the possible loss of continuity is to use  discontinuous Galerkin (DG) methods. The latter are in fact conforming, which is in stark contrast to DG methods for the discretization of second order elliptic partial differential equations for which they are nonconforming \cite{arnold2002unified}.
While nonconforming DG has also been studied recently for nonlocal models \cite{du18dg},
if the structure of the solution could be studied carefully, a well designed conforming DG method
could be a more competitive option as it could lessen  the cost of computation. In this work,
we propose a new kind of conforming DG method to approximate the nonlocal problem (\ref{nonlocal_diffusion}) with kernels satisfying (\ref{kernel_basic}) and (\ref{int_kernel}) where the
key is to adopt a hybrid version of continuous elements with DG at the boundary.

The paper is organized as follows. In Section \ref{section:structure}, the structure of the solution for the given problem is analyzed, which is a generalization of the results in \cite{silling2003deformation}. We also convert the original inhomogeneous problem (\ref{nonlocal_diffusion}) into the homogeneous problem (\ref{modified_problem}) whose solution is smoother, so we just need to discuss a smoother homogeneous problem (\ref{nonlocal_diff_homo}). Based on the structure of the solution, in Section \ref{section:DG_method} we propose a new DG method which solves the problem (\ref{nonlocal_diff_homo}) efficiently. Convergence analysis and condition number estimates along with asymptotic compatibility  for the method are given in Section \ref{section:Theoretical considerations}. Results of numerical experiments are reported in Section \ref{section:numerical_experiment}.

\section{The structure of the solution}\label{section:structure}
To design more efficient numerical discretization, we first present some regularity studies on the nonlocal constrained value problem.  We recall first some one dimensional results presented in \cite{silling2003deformation}: when using peridynamic theory to model the elasticity on $\mathbb{R}=(-\infty,\infty)$, the displacement field $u$ has the same smoothness as the body force field $b$. In addition, any discontinuity in the kernel $\gamma$ (or in one of its derivatives) has some additional effect on the smoothness of $u$. For a peridynamic bar, suppose that $b$ has a discontinuity in its $N$th derivative at some $x=x_b$, and $\gamma$ has a discontinuity in its $L$th derivative at some $x=x_c$, then $u$ has a discontinuity in its $(N + nL + n)$th derivative at $x=x_b+nx_c, n=1,2,\cdots$. This propagation of discontinuities is illustrated schematically in \cite[Figure 3]{silling2003deformation}. Roughly speaking, the smoothness of $u$ increases as one moves away from
the location where the solution is discontinuous due to the discontinuity of the body force field $b$.
These types of step-wise improved regularity associated with a finite horizon parameter have also been observed for nonlocal initial value problems of nonlocal-in-time dynamic systems in \cite{du17dcdsb}.

\subsection{The structure of the solution for general dimensional cases on bounded domains}
Recall for the 1D case in an unbounded domain, the regularity of the solution for a nonlocal problem is affected by both the right hand side function and the kernel function, assuming good behavior of the solution at infinities. In this subsection we consider the effect of these two sources on the regularity of the solution for general multidimensional constrained value problem on a bounded domain.

First, let us present a result to reduce  the problem (\ref{nonlocal_diffusion}) which we are concerned with to be a problem with a homogeneous nonlocal constraint.
Denoted by
\begin{equation}\label{btrans}
     \overline{b}({\bf x})=\left \{
     \begin{array}{ll}
     b({\bf x}), & {\bf x}\in \Omega,\\
     g({\bf x}), & {\bf x}\in \Omega_I,
     \end{array}
     \right .
\end{equation}
and
\begin{equation}\label{utrans}
\overline{u}({\bf x})=u({\bf x})-\overline{b}({\bf x}).
\end{equation}
Then the nonlocal operator $-\mathcal{L}$ on $\overline{u}({\bf x})$ is
\begin{align*}
-\mathcal{L}\overline{u}({\bf x})&=f({\bf x})=\int_{B_{\delta}({\bf x})\cap\Omega}\overline{b}({\bf y})\gamma({\bf y}-{\bf x})d{\bf y}\\
&=\int_{B_{\delta}({\bf x})\cap\Omega}b({\bf y})\gamma({\bf y}-{\bf x})d{\bf y}+\int_{B_{\delta}({\bf x})\cap\Omega_I}g({\bf y})\gamma({\bf y}-{\bf x})d{\bf y}, \quad \forall {\bf x} \in \Omega.
\end{align*}
Thus, $\overline{u}({\bf x})$ is the solution of the following homogeneous nonlocal problem
\begin{equation}\label{modified_problem}
     \left \{
     \begin{array}{rl}
-\mathcal{L}\overline{u}({\bf x})&=f({\bf x}),\:\: \mbox{on}\: \Omega, \\
\overline{u}({\bf x})&=0,\qquad   \mbox{on} \:\Omega_I.
     \end{array}
     \right .
\end{equation}
Due to $\overline{b}({\bf y})\in L^2(\Omega\cup\Omega_I)$, $\gamma({\bf s})\in L^2(\mathbb{R}^d)$, and the fact that convolution of functions in dual $L^p(\mathbb{R}^d)$-spaces is continuous, we know that $f({\bf x})\in C(\Omega)$.

The problem (\ref{nonlocal_diffusion}) is equivalent to the problem (\ref{modified_problem}) which has a homogeneous nonlocal constraint. That is, we just need to study the following homogeneous nonlocal problem:
\begin{equation}\label{nonlocal_diff_homo}
     \left \{
     \begin{array}{rl}
-\mathcal{L}u({\bf x})&=b({\bf x}),\:\: \mbox{on}\: \Omega, \\
u({\bf x})&=0,\quad\:\:\:   \mbox{on} \:\Omega_I,
     \end{array}
     \right .
\end{equation}
with $b({\bf x})\in C(\Omega)$, $\gamma({\bf x},{\bf y})$ satisfying (\ref{kernel_basic}) and (\ref{int_kernel}).
We will show that if $\gamma({\bf x},{\bf y})$ satisfies some mild assumptions, the results recalled earlier for the one dimensional case can be generalized to multidimensional case on a bounded domain.

\subsection{Discontinuities due to the right hand side function}
\begin{theorem}\label{Thm:Continuity_Zeroboundary}
If $\gamma({\bf x},{\bf y})$ satisfies (\ref{kernel_basic}) and (\ref{int_kernel}), and $b({\bf x})\in C(\Omega)$, the solution of (\ref{nonlocal_diff_homo})
is continuous in $\Omega$, i.e., $u({\bf x})\in C(\Omega)$.
\end{theorem}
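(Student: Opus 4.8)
The plan is to recognize that the homogeneous problem (\ref{nonlocal_diff_homo}) is, after a simple rearrangement, a Fredholm integral equation of the second kind, and then to exploit the smoothing property of its integral operator. First I would use the normalization $\int_{\mathbb{R}^{d}}\gamma({\bf x},{\bf y})\,d{\bf y}=\int_{\mathbb{R}^{d}}\tilde{\gamma}(|{\bf z}|)\,d{\bf z}=1$ from (\ref{int_kernel}), valid for every ${\bf x}\in\Omega$, to split the nonlocal operator as
\begin{equation*}
-\mathcal{L}u({\bf x})=u({\bf x})\int_{\mathbb{R}^{d}}\gamma({\bf x},{\bf y})\,d{\bf y}-\int_{\mathbb{R}^{d}}u({\bf y})\gamma({\bf x},{\bf y})\,d{\bf y}=u({\bf x})-\int_{\mathbb{R}^{d}}u({\bf y})\gamma({\bf x},{\bf y})\,d{\bf y}.
\end{equation*}
Because $\gamma({\bf x},{\bf y})=0$ for ${\bf y}\notin B_{\delta}({\bf x})$ by (\ref{kernel_basic}), the ${\bf y}$-integral is effectively taken over $B_{\delta}({\bf x})\subset\Omega\cup\Omega_I$, and since $u\equiv 0$ on $\Omega_I$ it reduces to an integral over $\Omega$. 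Thus the equation $-\mathcal{L}u=b$ on $\Omega$ is equivalent to
\begin{equation*}
u({\bf x})=b({\bf x})+\int_{\Omega}u({\bf y})\gamma({\bf x},{\bf y})\,d{\bf y},\qquad {\bf x}\in\Omega .
\end{equation*}

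Next I would extend the solution by zero outside $\Omega$. Since $u$ already vanishes on $\Omega_I$, the extension $\tilde{u}:=u\,\mathds{1}_{\Omega}$ belongs to $L^2(\mathbb{R}^{d})$ (the solution lies in the natural constrained $L^2$ energy space by the well-posedness theory, e.g.\ \cite{du2012analysis}) and has compact support. Using the radial form $\gamma({\bf x},{\bf y})=\tilde{\gamma}(|{\bf y}-{\bf x}|)$ from (\ref{int_kernel}), the integral term is exactly a convolution,
\begin{equation*}
\int_{\Omega}u({\bf y})\gamma({\bf x},{\bf y})\,d{\bf y}=(\tilde{u}*\tilde{\gamma})({\bf x}).
\end{equation*}
The crucial point is that $\tilde{u}\in L^2(\mathbb{R}^{d})$ and $\tilde{\gamma}\in L^2(\mathbb{R}^{d})$ lie in mutually dual $L^p$-spaces, so their convolution is a bounded continuous function -- precisely the same regularity mechanism already invoked for $f$ in the reduction preceding (\ref{nonlocal_diff_homo}). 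Hence $\tilde{u}*\tilde{\gamma}\in C(\mathbb{R}^{d})$, and in particular it is continuous on $\Omega$.

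Finally, writing $u=b+\tilde{u}*\tilde{\gamma}$ on $\Omega$ and combining the hypothesis $b\in C(\Omega)$ with the continuity of the convolution term yields $u\in C(\Omega)$, as claimed. I expect the step requiring the most care to be the convolution-continuity argument: one must confirm that $\tilde{\gamma}\in L^2(\mathbb{R}^{d})$ under the standing assumptions (so that $L^2*L^2$ lands in $C_0(\mathbb{R}^{d})$), which is exactly where the integrable-kernel hypothesis provides the regularity gain that promotes the $L^2$ solution to a continuous one. By contrast, the rearrangement into a second-kind equation and the reduction of the domain of integration to $\Omega$ are routine consequences of (\ref{kernel_basic})--(\ref{int_kernel}) and the homogeneous volume constraint.
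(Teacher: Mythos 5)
Your proposal is correct and follows essentially the same route as the paper: rearrange the equation into the second-kind form $u({\bf x})=b({\bf x})+\int_{B_{\delta}({\bf x})}u({\bf y})\gamma({\bf y}-{\bf x})\,d{\bf y}$, then invoke the continuity of the convolution of functions in dual $L^p$-spaces ($u\in L^2$, $\tilde{\gamma}\in L^2$) to conclude $u\in C(\Omega)$ from $b\in C(\Omega)$. The one step you rightly flag as delicate --- that $\tilde{\gamma}\in L^2(\mathbb{R}^d)$, which does not follow from integrability alone --- is asserted without further justification in the paper's own proof as well, so you share the paper's implicit assumption rather than introducing a new gap.
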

\begin{proof}
Since $\gamma({\bf x},{\bf y})$ satisfies (\ref{kernel_basic}) and (\ref{int_kernel}), we easily see that
\begin{align*}
u({\bf x})\in L^2(\Omega\cup\Omega_I).
\end{align*}
From (\ref{nonlocal_diff_homo}), we have
\begin{align}\label{u_relation}
u({\bf x})=b({\bf x})+\int_{B_{\delta}({\bf x})}u({\bf y})\gamma({\bf y}-{\bf x})d{\bf y}, \quad \forall {\bf x} \in \Omega.
\end{align}
Since $u({\bf y})\in L^2(\Omega\cup\Omega_I)$ and $\gamma({\bf s})\in L^2(\mathbb{R}^d)$,
we have that
$$\int_{B_{\delta}({\bf x})}u({\bf y})\gamma({\bf y}-{\bf x})d{\bf y}$$ is continuous for ${\bf x}$ in $\Omega$. Together with the condition $b({\bf x})\in C(\Omega)$, we complete the proof. 
\end{proof}

It is obvious from (\ref{u_relation}) that the discontinuity at a point $\bf x$ of the right hand side $b$ will lead to the discontinuity at the same point of the solution $u$. With an additional assumption on the kernel, we can bootstrap a higher order regularity result as follows.

\begin{theorem}\label{Thm:H1:smooth}
Suppose that $\gamma({\bf x},{\bf y})$ satisfies (\ref{kernel_basic}) and (\ref{int_kernel}). If $\tilde{\gamma}(r)\in C^1(0,\delta)$, $\tilde{\gamma}(\delta)=0$ and $b({\bf x})\in C^1(\Omega)$, then $u({\bf x})\in C^1(\Omega)$.
\end{theorem}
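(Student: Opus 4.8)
The plan is to reuse the integral identity (\ref{u_relation}) and to exploit that, by (\ref{int_kernel}), the kernel is translation invariant, so the integral term there is genuinely a convolution. Writing $\gamma$ also for the extension of $\tilde{\gamma}(|\cdot|)$ by zero outside $B_{\delta}(0)$, and recalling that $u\equiv 0$ on $\Omega_I$ so that $u$ may be viewed as an $L^2$ function on all of $\mathbb{R}^d$, (\ref{u_relation}) becomes
\[
u({\bf x}) = b({\bf x}) + (u*\gamma)({\bf x}), \qquad {\bf x}\in\Omega.
\]
Since $b({\bf x})\in C^1(\Omega)$ by hypothesis, it suffices to show that the convolution $u*\gamma$ belongs to $C^1(\Omega)$; the displayed identity then transfers this regularity to $u$ itself. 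Note this is an interior statement, so the possible jump of $u$ across $\partial\Omega$ is harmless: all the smoothness of $u*\gamma$ is supplied by $\gamma$, not by $u$.

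First I would compute the first-order partial derivatives of $u*\gamma$ by differentiating under the integral sign and moving the derivative onto the kernel, that is,
\[
\partial_{x_i}(u*\gamma) = u*(\partial_{x_i}\gamma), \qquad i=1,\dots,d.
\]
The legitimacy of this interchange is the heart of the matter, and it is precisely where the assumption $\tilde{\gamma}(\delta)=0$ enters. Extended by zero, $\gamma$ is continuous across the sphere $|{\bf z}|=\delta$ exactly because $\tilde{\gamma}(\delta)=0$; hence its distributional gradient carries no singular surface contribution there and coincides almost everywhere with the classical expression $\tilde{\gamma}'(|{\bf z}|)\,{\bf z}/|{\bf z}|$, which (granted the integrability near the origin addressed below) is an $L^2(\mathbb{R}^d)$ function supported in $\overline{B_{\delta}(0)}$, where $\tilde{\gamma}\in C^1(0,\delta)$ supplies the pointwise formula away from the origin and the support boundary. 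Were $\tilde{\gamma}(\delta)\neq 0$, the jump would produce a surface-delta term and $u*\gamma$ would in general be only Lipschitz rather than $C^1$, so this hypothesis is not cosmetic.

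With $\partial_{x_i}(u*\gamma)=u*(\partial_{x_i}\gamma)$ established, the conclusion follows exactly as in Theorem \ref{Thm:Continuity_Zeroboundary}: since $u\in L^2(\Omega\cup\Omega_I)$ and $\partial_{x_i}\gamma\in L^2(\mathbb{R}^d)$, the convolution $u*(\partial_{x_i}\gamma)$ is continuous on $\Omega$ by continuity of translation in $L^2$ together with the Cauchy--Schwarz inequality. Hence every first-order partial derivative of $u*\gamma$ is continuous, so $u*\gamma\in C^1(\Omega)$, and combined with $b\in C^1(\Omega)$ this yields $u({\bf x})\in C^1(\Omega)$.

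The step I expect to be the main obstacle is the rigorous justification of $\partial_{x_i}(u*\gamma)=u*(\partial_{x_i}\gamma)$, namely controlling the two places where the classical derivative of $\gamma$ can misbehave: the origin ${\bf z}=0$ and the support boundary $|{\bf z}|=\delta$. The boundary is handled by $\tilde{\gamma}(\delta)=0$ as above; at the origin one must verify that $\tilde{\gamma}'(|{\bf z}|)\,{\bf z}/|{\bf z}|$ is still square integrable, equivalently that $\int_0^\delta |\tilde{\gamma}'(r)|^2 r^{d-1}\,dr<\infty$, so that the formal differentiation can be validated by a dominated-convergence argument applied to the difference quotients. Once this integrability is secured, the remainder of the argument is routine.
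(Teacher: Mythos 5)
Your proof is correct and lands on the same key identity as the paper's, but the route to it is genuinely different, and in one respect cleaner. The paper starts from (\ref{u_relation_s}), takes the directional derivative \emph{inside} the integral---placing it on $u({\bf x}+{\bf s})$---and then integrates by parts over $B_{\delta}({\bf 0})$; the resulting boundary integral over $\partial B_{\delta}({\bf 0})$ carries the factor $\tilde{\gamma}(\delta)=0$ and drops, yielding (\ref{u_de_intbypart}), after which the paper concludes, exactly as you do, by continuity of the convolution of dual $L^{p}$ functions. You instead never differentiate $u$ at all: you put the derivative directly on the zero-extended kernel, observing that $\tilde{\gamma}(\delta)=0$ is precisely what makes that extension continuous across $|{\bf z}|=\delta$, so its distributional gradient has no surface term and coincides a.e.\ with $\tilde{\gamma}'(|{\bf z}|)\,{\bf z}/|{\bf z}|$; then $\nabla(u*\gamma)=u*\nabla\gamma$ is continuous, hence $u*\gamma\in C^{1}(\Omega)$, and the regularity transfers to $u$ through $u=b+u*\gamma$. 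What your version buys is the removal of a logical wrinkle in the paper's argument: differentiating under the integral sign with the derivative on $u$ presupposes that $u$ is differentiable, which is the very conclusion being proven, so the paper's computation must be read as a formal derivation of (\ref{u_de_intbypart}) to be justified by difference quotients or a bootstrap; your distributional argument needs only $u\in L^{2}(\Omega\cup\Omega_I)$, which is known a priori. What the paper's version buys is that it stays entirely classical (divergence theorem, no distribution theory) and produces (\ref{u_de_intbypart}) in the exact form reused later in the proof of Theorem \ref{Theorem:C2}. One caveat applies equally to both proofs, and you are right to flag it explicitly: the final convolution step needs $\tilde{\gamma}'(|\cdot|)\in L^{2}(B_{\delta}({\bf 0}))$, i.e.\ $\int_{0}^{\delta}|\tilde{\gamma}'(r)|^{2}r^{d-1}\,dr<\infty$, and the stated hypothesis $\tilde{\gamma}\in C^{1}(0,\delta)$ alone does not guarantee this near $r=0$ (nor, strictly speaking, near $r=\delta$); the paper leaves this integrability implicit, whereas you isolate it as the one condition still to be verified.
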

\begin{proof}
From (\ref{u_relation}), we know
\begin{align}\label{u_relation_s}
u({\bf x})=b({\bf x})+\int_{B_{\delta}({\bf 0})}u({\bf x}+{\bf s})\gamma({\bf s})d{\bf s}.
\end{align}
For any unit vector ${\bf t}$, we take the corresponding directional derivative for  (\ref{u_relation_s}), so
\begin{align*}
\frac{\partial u({\bf x})}{\partial{\bf t}}=\frac{\partial b({\bf x})}{\partial{\bf t}}+\int_{B_{\delta}({\bf 0})}\frac{\partial u({\bf x}+{\bf s})}{\partial{\bf t}}\gamma({\bf s})d{\bf s}.
\end{align*}
Since $\tilde{\gamma}(r)\in C^1(0,\delta)$ and $\tilde{\gamma}(\delta)=0$, we have
\begin{align*}
&\int_{B_{\delta}({\bf 0})}\frac{\partial u({\bf x}+{\bf s})}{\partial{\bf t}}\gamma({\bf s})d{\bf s}\\
&=\int_{\partial B_{\delta}({\bf 0})}u({\bf x}+{\bf s})\gamma({\bf s}){\bf t}\cdot{\bf n}_{\bf s}d{\bf S}_{\bf s}-\int_{B_{\delta}({\bf 0})}u({\bf x}+{\bf s}){\bf t}\cdot\nabla\gamma({\bf s})d{\bf s}\\
&=\tilde{\gamma}(\delta)\int_{\partial B_{\delta}({\bf 0})}u({\bf x}+{\bf s}){\bf t}\cdot{\bf n}_{\bf s}d{\bf S}_{\bf s}-\int_{B_{\delta}({\bf 0})}u({\bf x}+{\bf s})\tilde{\gamma}'(|{\bf s}|){\bf t}\cdot{\bf n}_{\bf s}d{\bf s}\\
&=-\int_{B_{\delta}({\bf 0})}u({\bf x}+{\bf s})\tilde{\gamma}'(|{\bf s}|){\bf t}\cdot{\bf n}_{\bf s}d{\bf s}.
\end{align*}
Thus,
\begin{align}\label{u_de_intbypart}
\frac{\partial u({\bf x})}{\partial{\bf t}}=\frac{\partial b({\bf x})}{\partial{\bf t}}-\int_{B_{\delta}({\bf 0})}u({\bf x}+{\bf s})\tilde{\gamma}'(|{\bf s}|){\bf t}\cdot{\bf n}_{\bf s}d{\bf s}.
\end{align}
Since the convolution of functions in dual $L^p(\mathbb{R}^{d})$-spaces is continuous, the second term in the right hand side of (\ref{u_de_intbypart}) is continuous with respect to ${\bf x}$. Together with the condition $b({\bf x})\in C^1(\Omega)$, we complete the proof. 
\end{proof}

From (\ref{u_de_intbypart}) we get that under the assumptions of Theorem \ref{Thm:H1:smooth},
if the first derivative of the right hand side $b$ is discontinuous at some point $\bf x$, the first derivative of the solution $u$ will be discontinuous at the same point. However, if we may have both the condition $\tilde{\gamma}(\delta)>0$ and the condition $b({\bf x})\notin C^1(\Omega)$, the conclusion $u({\bf x})\in C^1(\Omega)$ may still hold (see Example 1, where in fact $u({\bf x})\in C^{\infty}(\Omega)$). This
is not contradicting as \emph{the sum of two discontinuous function could be continuous, and even infinitely differentiable}.

\subsection{Propagation of discontinuities due to the kernel}
For the convenience of discussion, let us denote by $\Omega_{1}=\{{\bf x}\in \Omega: \mbox{dist}(x,\partial\Omega)>\delta\}$ and $\Omega_{2}=\Omega\setminus\overline{\Omega_{1}}$. In fact, the
significance of the
result Theorem \ref{Thm:Continuity_Zeroboundary} is twofold. First, it indicates \emph{the smoothness of $u({\bf x})$ is the same as $b({\bf x})$ for general multidimensional case on a bounded domain}. Second, it reveals the possible propagation  of discontinuities due to the kernel. Although $b({\bf x})\in C(\Omega)$, it might be discontinuous across $\partial\Omega$. So does $u({\bf x})$, and this discontinuity will be propagated to those points on $\partial\Omega_{1}$, which are $\delta$ distance from $\partial\Omega$ onto one order higher derivatives  by Theorem \ref{Thm:Continuity_Zeroboundary}, which is consistent with the conclusion for 1D case on $\mathbb{R}$ with $N=0$ and $L=0$. Similar argument can be given for Theorem \ref{Thm:H1:smooth} which is consistent with the conclusion for 1D case on $\mathbb{R}$ with $N=0$ and $L=1$. This bootstrap procedure could be repeated again and again, and the corresponding results for general $N$ and $L$ then follow.

Let us now emphasize  on the importance and necessity for the smoothness of the kernel function. For instance, in Theorem \ref{Thm:H1:smooth} $\tilde{\gamma}(\delta)=0$ is required such that $\tilde{\gamma}$ has a discontinuity in its first (but not zeroth) derivative at $x=\delta$. If $\tilde{\gamma}(\delta)>0$, then from the proof of the Theorem \ref{Thm:H1:smooth}, we see that  $u({\bf x})\in C^1(\Omega)$ may not hold. In fact, for all ${\bf x}_0\in\partial\Omega_{1}$ and any unit vector ${\bf t}$, since
\begin{align*}
\frac{\partial u({\bf x}_0)}{\partial{\bf t}}&=\frac{\partial b({\bf x}_0)}{\partial{\bf t}}+\tilde{\gamma}(\delta)\int_{\partial B_{\delta}({\bf 0})}u({\bf x}_0+{\bf s}){\bf t}\cdot{\bf n}_{\bf s}d{\bf S}_{\bf s}\\
&-\int_{B_{\delta}({\bf 0})}u({\bf x}_0+{\bf s})\tilde{\gamma}'(|{\bf s}|){\bf t}\cdot{\bf n}_{\bf s}d{\bf s},
\end{align*}
if $\tilde{\gamma}(\delta)>0$ (that is $\tilde{\gamma}$ has a discontinuity in its zeroth derivative) and $b$ (thus $u$) is discontinuous across $\partial\Omega$, then the term
\begin{equation*}
\int_{\partial B_{\delta}({\bf 0})}u({\bf x}+{\bf s}){\bf t}\cdot{\bf n}_{\bf s}d{\bf S}_{\bf s}
\end{equation*}
is likely to be discontinuous across ${\bf x}_0$. If so, $\frac{\partial u}{\partial{\bf t}}$ would be discontinuous at ${\bf x}_0$ ($u$ has a discontinuity in its first derivative). This situation might happen, as illustrated in Example 2. Using a similar argument we could prove the following theorem.

\begin{theorem}\label{Theorem:C2}
Suppose that $\gamma({\bf x},{\bf y})$ satisfies (\ref{kernel_basic}) and (\ref{int_kernel}). If the following two conditions hold:

(i) $b({\bf x})\in C^1(\Omega)$, $b\in C^{2}(\Omega_{1})$, $b\in C^2(\Omega_{2})$,

(ii) $\tilde{\gamma}(r)\in C^1(0,\delta)$ and $\tilde{\gamma}(\delta)=0$.\\
Then $u\in C^1(\Omega)$, $u\in C^2(\Omega_{1})$ and $u\in C^2(\Omega_{2})$.
\end{theorem}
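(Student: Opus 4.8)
The plan is to bootstrap one more time from the first--order identity (\ref{u_de_intbypart}), in exactly the way Theorem~\ref{Thm:H1:smooth} was derived from (\ref{u_relation_s}). Since hypotheses (i)--(ii) contain those of Theorem~\ref{Thm:H1:smooth}, that theorem already yields $u\in C^1(\Omega)$, which is the first assertion. Writing (\ref{u_de_intbypart}) as
\[
\frac{\partial u({\bf x})}{\partial{\bf t}}=\frac{\partial b({\bf x})}{\partial{\bf t}}-I_{\bf t}({\bf x}),\qquad
I_{\bf t}({\bf x})=\int_{B_{\delta}({\bf 0})}u({\bf x}+{\bf s})\,\tilde\gamma'(|{\bf s}|)\,{\bf t}\cdot{\bf n}_{\bf s}\,d{\bf s},
\]
and noting that by hypothesis (i) the second directional derivatives of $b$ are continuous separately on $\Omega_{1}$ and on $\Omega_{2}$, the task reduces to proving $I_{\bf t}\in C^1(\Omega_{1})$ and $I_{\bf t}\in C^1(\Omega_{2})$ for every unit vector ${\bf t}$ and every differentiation direction ${\bf t}'$.

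For ${\bf x}\in\Omega_{1}$ one has $\mbox{dist}({\bf x},\partial\Omega)>\delta$, so $\overline{B_{\delta}({\bf x})}\subset\Omega$ and $u$ is $C^1$ on the whole interaction ball. The difference quotients of ${\bf x}\mapsto u({\bf x}+{\bf s})$ then converge uniformly in ${\bf s}$ on this compact ball, which (under the mild assumption $\tilde\gamma'(|\cdot|)\in L^{2}(B_{\delta}({\bf 0}))$) legitimizes differentiating under the integral sign with the derivative falling on $u$,
\[
\frac{\partial I_{\bf t}({\bf x})}{\partial{\bf t}'}=\int_{B_{\delta}({\bf 0})}\frac{\partial u({\bf x}+{\bf s})}{\partial{\bf t}'}\,\tilde\gamma'(|{\bf s}|)\,{\bf t}\cdot{\bf n}_{\bf s}\,d{\bf s}.
\]
The right--hand side is a convolution of the continuous function $\partial u/\partial{\bf t}'$ with an $L^2$ kernel and is therefore continuous in ${\bf x}$; hence $I_{\bf t}\in C^1(\Omega_{1})$ and $u\in C^2(\Omega_{1})$.

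The case ${\bf x}\in\Omega_{2}$ is the crux. Now $B_{\delta}({\bf x})$ straddles $\partial\Omega$, and since $u\equiv0$ on $\Omega_{I}$ while the trace of $u$ on $\partial\Omega$ from the $\Omega$--side does not vanish, the extended integrand ${\bf s}\mapsto u({\bf x}+{\bf s})$ is only piecewise $C^1$, with a jump across $\{{\bf s}:{\bf x}+{\bf s}\in\partial\Omega\}$, so the simple ``derivative onto $u$'' step is no longer valid. Using $u\equiv0$ on $\Omega_{I}$ to write $I_{\bf t}({\bf x})=\int_{B_{\delta}({\bf x})\cap\Omega}u({\bf y})\,\tilde\gamma'(|{\bf y}-{\bf x}|)\,{\bf t}\cdot\frac{{\bf y}-{\bf x}}{|{\bf y}-{\bf x}|}\,d{\bf y}$ and differentiating through the distributional derivative of the zero--extension of $u$, one obtains a volume term plus a boundary term carrying the trace of $u$,
\[
\frac{\partial I_{\bf t}}{\partial{\bf t}'}=\int_{B_{\delta}({\bf x})\cap\Omega}\frac{\partial u({\bf y})}{\partial{\bf t}'}\,\tilde\gamma'(|{\bf y}-{\bf x}|)\,{\bf t}\cdot\frac{{\bf y}-{\bf x}}{|{\bf y}-{\bf x}|}\,d{\bf y}
-\int_{\partial\Omega\cap B_{\delta}({\bf x})}u({\bf y})\,({\bf t}'\cdot{\bf n}({\bf y}))\,\tilde\gamma'(|{\bf y}-{\bf x}|)\,{\bf t}\cdot\frac{{\bf y}-{\bf x}}{|{\bf y}-{\bf x}|}\,dS_{\bf y},
\]
with ${\bf n}({\bf y})$ the outward unit normal of $\Omega$. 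The remaining work is to show both terms are continuous on $\Omega_{2}$: the volume term is a convolution with an $L^{2}$ kernel over a domain whose measure varies continuously with ${\bf x}$, hence continuous; for the surface term the mild hypothesis that $\tilde\gamma'$ is continuous up to $r=\delta$ keeps the integrand bounded, and since the cap $\partial\Omega\cap B_{\delta}({\bf x})$ sweeps continuously as ${\bf x}$ moves within $\Omega_{2}$, dominated convergence gives continuity in ${\bf x}$. Together with $b\in C^2(\Omega_{2})$ this yields $u\in C^2(\Omega_{2})$.

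I expect the surface (jump) term to be the main obstacle. It is exactly this term --- evaluated as ${\bf x}$ approaches $\partial\Omega_{1}$, where $|{\bf y}-{\bf x}|\uparrow\delta$ and $\tilde\gamma'(\delta)$ need not vanish --- that encodes the propagated discontinuity described before the theorem, and explains why $C^2$ regularity is claimed only on the two subregions and not across $\partial\Omega_{1}$. Establishing its continuity strictly inside $\Omega_{2}$ under merely $\tilde\gamma\in C^1(0,\delta)$ (handling the moving cap and the behaviour of $\tilde\gamma'$ near $0$ and near $\delta$) is the delicate step; the hypothesis $\tilde\gamma(\delta)=0$ is precisely what removed the analogous boundary term at the first differentiation, securing $u\in C^1(\Omega)$ globally.
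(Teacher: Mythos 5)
Your proof is correct, and its skeleton is the same as the paper's: invoke Theorem \ref{Thm:H1:smooth} for $u\in C^{1}(\Omega)$, differentiate (\ref{u_de_intbypart}) once more, and treat $\Omega_{1}$ and $\Omega_{2}$ separately. The genuine difference is in $\Omega_{2}$, where you are more careful than the paper itself. The paper's entire argument there is the single identity $\frac{\partial^2 u({\bf x})}{\partial{\bf t}_1\partial{\bf t}_2}=\frac{\partial^2 b({\bf x})}{\partial{\bf t}_1\partial{\bf t}_2}-\int_{B_{\delta}({\bf 0})}\frac{\partial u({\bf x}+{\bf s})}{\partial {\bf t}_2}\,\tilde{\gamma}'(|{\bf s}|)\,{\bf t}_1\cdot{\bf n}_{\bf s}\,d{\bf s}$, asserted in both subdomains with no boundary contribution; that is, it differentiates under the integral with the derivative falling on $u$ --- precisely the step you identify as illegitimate for ${\bf x}\in\Omega_{2}$, where $B_{\delta}({\bf x})$ straddles $\partial\Omega$ and the zero-extension of $u$ has a jump. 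Your version restores the missing surface term carrying the trace of $u$ on $\partial\Omega\cap B_{\delta}({\bf x})$ and then checks continuity of both the volume and the surface contributions inside $\Omega_{2}$. This is not pedantry: taken literally, the paper's identity writes $\partial^{2}_{{\bf t}_1{\bf t}_2}u$ as $\partial^{2}_{{\bf t}_1{\bf t}_2}b$ minus a convolution that is continuous across $\partial\Omega_{1}$, which would give $u\in C^{2}(\Omega)$ whenever $b\in C^{2}(\Omega)$, contradicting the paper's own Remark \ref{Remark:kernel} and Example 3, where the jump of $u''$ across $\partial\Omega_{1}$ is produced by exactly your surface term (in 1D it equals $u(1-)\,\tilde{\gamma}'(\delta-)$ up to sign). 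So your argument is the completed form of the paper's proof, and your closing observation correctly explains why $C^{2}$ regularity holds only on the two subregions. The one caveat, which you share with the paper but at least flag explicitly: the hypothesis $\tilde{\gamma}\in C^{1}(0,\delta)$ alone controls $\tilde{\gamma}'$ neither near $r=0$ nor near $r=\delta$, so some boundedness or integrability of $\tilde{\gamma}'$ must be assumed for the convolution and cap integrals to converge and to depend continuously on ${\bf x}$.
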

\begin{proof}
The conditions of Theorem \ref{Thm:H1:smooth} hold due to the conditions (i) and (ii), so $u\in C^{1}(\Omega)$. Furthermore, for
any two unit vectors ${\bf t}_1$ and ${\bf t}_2$, take a directional derivative for (\ref{u_de_intbypart}), 
\begin{align*}
\frac{\partial^2 u({\bf x})}{\partial{\bf t}_1\partial{\bf t}_2}&=\frac{\partial^2 b({\bf x})}{\partial{\bf t}_1\partial{\bf t}_2}
-\int_{B_{\delta}({\bf 0})}\frac{\partial u({\bf x}+{\bf s})}{\partial {\bf t}_2}\tilde{\gamma}'(|{\bf s}|){\bf t}_1\cdot{\bf n}_{\bf s}d{\bf s}.
\end{align*}
Applying this equality in $\Omega_{1}$ and $\Omega_{2}$ will lead to the conclusion $u\in C^{2}(\Omega_{1})$ and $u\in C^{2}(\Omega_{2})$, respectively. 
\end{proof}
To get the optimal convergence order, we always need the regularity $u\in H^2(\Omega)$, the following corollary give a sufficient condition to guarantee this property.
\begin{corollary}\label{Corollary:H2}
Suppose that $\gamma({\bf x},{\bf y})$ satisfies (\ref{kernel_basic}) and (\ref{int_kernel}). If the following two conditions hold:

(i) $b({\bf x})\in C^1(\Omega)$, $b\in H^{2}(\Omega_{1})$, $b\in H^2(\Omega_{2})$,

(ii) $\tilde{\gamma}(r)\in C^1(0,\delta)$ and $\tilde{\gamma}(\delta)=0$.\\
Then $u\in H^2(\Omega)$.
\end{corollary}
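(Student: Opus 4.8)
The plan is to combine the global $C^1$ regularity supplied by Theorem \ref{Thm:H1:smooth} with piecewise $H^2$ bounds obtained by repeating the second-order computation in the proof of Theorem \ref{Theorem:C2}, and then to glue the two pieces across the interior interface $\Gamma:=\partial\Omega_{1}$. First I would check that the hypotheses of Theorem \ref{Thm:H1:smooth} are met: condition (i) gives $b({\bf x})\in C^1(\Omega)$ and condition (ii) gives $\tilde{\gamma}(r)\in C^1(0,\delta)$ with $\tilde{\gamma}(\delta)=0$, so that theorem applies and yields $u({\bf x})\in C^1(\Omega)$. In particular $u$ and all of its first-order directional derivatives are continuous throughout $\Omega$, hence their one-sided traces agree from both sides of $\Gamma$. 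This is the ingredient that will later prevent any singular contribution on $\Gamma$ when the two pieces are recombined.

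Next I would establish $u\in H^2(\Omega_{1})$ and $u\in H^2(\Omega_{2})$ separately. Differentiating the first-derivative identity (\ref{u_de_intbypart}) once more along an arbitrary unit vector ${\bf t}_2$, exactly as in the proof of Theorem \ref{Theorem:C2}, one obtains
\begin{align*}
\frac{\partial^2 u({\bf x})}{\partial{\bf t}_1\partial{\bf t}_2}=\frac{\partial^2 b({\bf x})}{\partial{\bf t}_1\partial{\bf t}_2}-\int_{B_{\delta}({\bf 0})}\frac{\partial u({\bf x}+{\bf s})}{\partial{\bf t}_2}\tilde{\gamma}'(|{\bf s}|)\,{\bf t}_1\cdot{\bf n}_{\bf s}\,d{\bf s}.
\end{align*}
On each of $\Omega_{1}$ and $\Omega_{2}$ the first term lies in $L^2$ by condition (i) (here is the only place where the $C^2$ hypothesis of Theorem \ref{Theorem:C2} is relaxed to $b\in H^2(\Omega_{1})$, $b\in H^2(\Omega_{2})$). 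The second term is a convolution of the function $\partial u/\partial{\bf t}_2$ (which is continuous on $\Omega$ and vanishes on $\Omega_I$, hence square-integrable) against the radial kernel $\tilde{\gamma}'(|{\bf s}|)\,{\bf t}_1\cdot{\bf n}_{\bf s}$; by the same continuity-of-convolution argument used in Theorem \ref{Theorem:C2} it is continuous, and therefore belongs to $L^2$ on the bounded sets $\Omega_{1}$ and $\Omega_{2}$. Consequently every second-order directional derivative of $u$ lies in $L^2(\Omega_{1})$ and in $L^2(\Omega_{2})$, which gives $u\in H^2(\Omega_{1})$ and $u\in H^2(\Omega_{2})$.

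Finally I would glue the two pieces. Since $\overline{\Omega_{1}}\subset\Omega$ one has $\overline{\Omega_{1}}\cup\Omega_{2}=\Omega$, and $\Gamma=\partial\Omega_{1}$ is a Lipschitz interface interior to $\Omega$. The standard characterization of $H^2$ then applies: a function that is piecewise $H^2$ on $\Omega_{1}$ and $\Omega_{2}$ and whose traces of $u$ and of $\nabla u$ coincide from both sides of $\Gamma$ belongs to $H^2(\Omega)$, because the distributional Hessian then carries no surface part supported on $\Gamma$. The required matching of traces is precisely the continuity of $u$ and $\nabla u$ across $\Gamma$ secured in the first step, so the conclusion $u\in H^2(\Omega)$ follows. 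I expect this gluing step to be the main obstacle: in contrast to the $C^2$ statement of Theorem \ref{Theorem:C2}, where the two regions are never recombined into a global claim, here one must rule out a jump in the weak second derivatives across $\Gamma$, and it is exactly the global $C^1$ regularity furnished by Theorem \ref{Thm:H1:smooth} that makes this possible.
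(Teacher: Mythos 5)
Your proof is correct and, for the key middle step, takes a genuinely different route from the paper's. Both arguments share the same skeleton: global $C^1$ regularity from Theorem \ref{Thm:H1:smooth}, piecewise $H^2$ regularity on $\Omega_{1}$ and $\Omega_{2}$, and a gluing step across $\partial\Omega_{1}$. For the piecewise $H^2$ step, however, the paper argues by approximation: it invokes the density of $C^2(\Omega_{i})$ in $H^2(\Omega_{i})$ together with Theorem \ref{Theorem:C2} in a single line --- an argument that, to be complete, would require a stability (a priori) bound for the solution in the piecewise $H^2$ norms of the data, which is never written down. You instead differentiate (\ref{u_de_intbypart}) once more, interpret the resulting identity weakly on each $\Omega_{i}$, and estimate the two right-hand terms directly in $L^2$: the term $\partial^2 b/\partial{\bf t}_1\partial{\bf t}_2$ lies in $L^2(\Omega_{i})$ by hypothesis (i), and the convolution term is bounded. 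This direct estimate is exactly what the paper's density argument tacitly presupposes, so your version is more self-contained; your gluing step is likewise more explicit than the paper's single sentence (``since $u\in C^1(\Omega)$ is proven, the result holds''), and the justification you give --- matching traces of $u$ and $\nabla u$ remove any surface contribution to the distributional Hessian across the Lipschitz interface $\partial\Omega_{1}$ --- is the right one. Two minor caveats. First, your reason for $\partial u/\partial{\bf t}_2\in L^2(\Omega)$, namely that it is continuous on $\Omega$ and vanishes on $\Omega_I$, is insufficient as stated, since continuity on an open set does not preclude blow-up at its boundary; the correct justification is again (\ref{u_de_intbypart}), which expresses $\nabla u$ as $\nabla b$ (piecewise $L^2$ by hypothesis) minus a bounded convolution. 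Second, when the ${\bf t}_2$-derivative is moved inside the convolution for ${\bf x}\in\Omega_{2}$, the jump of $u$ across $\partial\Omega$ in principle generates an additional surface integral over $\partial\Omega\cap B_{\delta}({\bf x})$; this term is suppressed both in your computation and in the paper's own proof of Theorem \ref{Theorem:C2}, so, since you are reproducing an established computation of the paper, this shared omission does not count against your proof.
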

\begin{proof}
Using the density of $C^2(\Omega_{i})$ in $H^2(\Omega_{i})$ ($i=1,2$) and Theorem \ref{Theorem:C2} we could get the result $u\in H^{2}(\Omega_{i})$. Since $u\in C^{1}(\Omega)$ is proven, the result $u\in H^{2}(\Omega)$ holds. 
\end{proof}

\section{A new DG method for nonlocal models with integrable kernels}\label{section:DG_method}
Here and after, for a function $\phi({\bf x})$ we denote $\lim\limits_{h\rightarrow 0-}\phi({\bf x}+h{\bf n_x})$ by $\phi({\bf x}-)$ in the case of no ambiguity.
Under the condition of Theorem \ref{Thm:Continuity_Zeroboundary}, we know that for given ${\bf x}\in \partial \Omega$,
\begin{equation*}
\lim\limits_{h\rightarrow 0+}u({\bf x}+h{\bf t})=u({\bf x}-), \forall {\bf t}\cdot{\bf n}_{\bf x}<0.
\end{equation*}
However $u({\bf x}-)$ does not need to be zero, that is $u({\bf x})$ is possibly discontinuous across $\partial\Omega$. Thus, it might be inefficient to use continuous FEM on the whole domain $\Omega\cup\Omega_I$. Moreover, since we do not specify the value of the right hand side function on $\Omega_I$ a priori, we have no control on the amount of the jump across $\partial\Omega$ where the solution is likely to be discontinuous.  Thus,
we propose a suitable DG method by adopting a hybrid version of DG (across the domain boundary) and continuous elements (in the interior domain).

As in \cite{du2012analysis} the nonlocal energy inner product, the nonlocal energy norm, nonlocal energy space, and nonlocal volume constrained energy space are defined by
\begin{equation*}
(u,v)_{\||} :={\Big (}\int_{\Omega\cup\Omega_I}\int_{\Omega\cup\Omega_I}{\big(}u({\bf y})-u({\bf x}){\big)}{\big(}v({\bf y})-v({\bf x}){\big)}\gamma({\bf x},{\bf y})d{\bf y}d{\bf x}{\Big )},
\end{equation*}
\begin{equation*}
\|| u\|| :=(u,u)^{1/2}_{\||},
\end{equation*}
\begin{equation*}
V(\Omega\cup\Omega_I):=\{u\in L^2(\Omega\cup\Omega_I): \|| u\||<\infty\},
\end{equation*}
\begin{equation*}
V_{c,0}(\Omega\cup\Omega_I):=\{u\in V(\Omega\cup\Omega_I): u({\bf x})=0\:\mbox{on} \:\Omega_I\},
\end{equation*}
respectively. Similar to the definition $V_{c,0}(\Omega\cup\Omega_I)$, the subspace of $L^{2}(\Omega\cup\Omega_I)$ is defined as follows:
\begin{equation*}
L^{2}_{c,0}(\Omega\cup\Omega_I):=\{u\in L^{2}(\Omega\cup\Omega_I): u({\bf x})=0\:\mbox{on} \:\Omega_I\}.
\end{equation*}
The authors in \cite{du2012analysis} prove that if the kernel function $\gamma({\bf x},{\bf y})$ satisfies (\ref{kernel_basic}) and (\ref{int_kernel}), then $V_{c,0}(\Omega\cup\Omega_I)$ is equivalent to $L^{2}_{c,0}(\Omega\cup\Omega_I)$. Denote by
\begin{equation*}
V_{c,0}^{pc}(\Omega\cup\Omega_I)=\{u\in V_{c,0}(\Omega\cup\Omega_I): u|_{\Omega} \in C(\Omega)\},
\end{equation*}
where the superscripts $pc$ means \emph{partly continuous} (continuous in $\Omega$).

For a given triangulation $\mathcal{T}_h$ of $\Omega\cup\Omega_I$ that simultaneously triangulates $\Omega$, let $\Omega_h=\mathcal{T}_h\cap\overline{\Omega}$.
Next, let $V^{pc,h}_{c,0}$ consist of those functions in $V_{c,0}^{pc}(\Omega\cup\Omega_I)$ that are piecewise linear. Since $\Omega$ is convex, this \emph{conforming} property is satisfied, that is,
\begin{equation}\label{Set_conf}
V_{c,0}^{pc,h}\subset V_{c,0}^{pc}(\Omega\cup\Omega_I).
\end{equation}

We assume that $\mathcal{T}_h$
is shape-regular and quasi-uniform \cite{brenner2007mathematical} as $h\rightarrow 0$, where $h$ denotes the diameter of the largest element in $\mathcal{T}_h$. For a fixed $\mathcal{T}_h$, the set of inner nodes of $\Omega_h$, i.e., all nodes in $\Omega_h\setminus\partial\Omega$, is denoted by $N\!I=\{{\bf x}_j: j=1,2,\cdots, m\}$, with piecewise linear basis functions defined on $\mathcal{T}_h$ being $\phi_j({\bf x}),\:j=1,2,\cdots, m$. The set of all nodes in $\Omega_h\cap\partial\Omega$ is denoted by $N\!B=\{{\bf x}_{m+j}: j=1,2,\cdots, n\}$ with piecewise linear basis functions defined on $\mathcal{T}_h$ being $\phi_{m+j}({\bf x}),\:j=1,2,\cdots, n$. The basis functions for the space $V_{c,0}^{pc,h}$ are as follows: for $j=1,2,\cdots,m+n$,
\begin{equation*}
     \widehat{\phi}_{j}({\bf x})=\left \{
     \begin{array}{ll}
     \phi_{j}({\bf x})|_{\Omega_h}, & {\bf x}\in \Omega_h,\\
     0, & {\bf x}\in (\Omega\cup\Omega_I)\setminus\Omega_{h}.
     \end{array}
     \right .
\end{equation*}
Throughout the paper, the generic constant $C$ is always independent of the finite element mesh parameter $h$.

Since we know the structure of the true solution and the space it belongs to, we could design a DG method for its approximation. First, variational form in $V_{c,0}^{pc}(\Omega\cup\Omega_I)$ finds $u({\bf x})\in V_{c,0}^{pc}(\Omega\cup\Omega_I)$, such that
\begin{equation}\label{modi_prob_var}
     -\int_{\Omega}\mathcal{L}u({\bf x}) w({\bf x})d{\bf x}=\int_{\Omega}b({\bf x})w({\bf x})d{\bf x},\: \forall w({\bf x}) \in V_{c,0}^{pc}(\Omega\cup\Omega_I).
\end{equation}
The finite dimensional approximation for (\ref{modi_prob_var}) finds $u_h({\bf x})\in V^{pc,h}_{c,0}$, such that
\begin{equation}\label{modi_disc_var}
-\int_{\Omega_{h}}\mathcal{L}u_h({\bf x}) w_h({\bf x})d{\bf x}=\int_{\Omega_{h}}b({\bf x})w_h({\bf x})d{\bf x}, \: \forall w_h({\bf x}) \in V_{c,0}^{pc,h}.
\end{equation}
Set $u_h({\bf x})=\sum\limits_{j=1}^{m+n}u_j\widehat{\phi}_{j}({\bf x})$, ${\bf u}=(u_{1}, u_{2},\cdots,u_{m+n})^T$. Denote by
\begin{align*}
{\bf d}=(d_1, d_2, \cdots, d_{m+n})^T,
\end{align*}
and
\begin{align*}
A_{II}=(a_{i,j})_{m\times m},\: A_{IB}=(a_{i,{m+j}})_{m\times n},\: A_{BB}=(a_{m+i,{m+j}})_{n\times n},
\end{align*}
with
\begin{align*}
d_i=\int_{\Omega_{h}}b({\bf x}) \widehat{\phi}_i({\bf x})d{\bf x},\:
a_{i,j}=-\int_{\Omega_{h}}\mathcal{L}\widehat{\phi}_j({\bf x}) \widehat{\phi}_i({\bf x})d{\bf x}.
\end{align*}
Set $w_h=\widehat{\phi}_i$, $i=1,2,\cdots,m+n$, the algebraic system of (\ref{modi_disc_var}) is
\begin{align*}
A{\bf u}={\bf d},
\end{align*}
with
\begin{equation}\label{Stiff_matrix}
A=\left(\begin{array}{cc}
A_{II} &  A_{IB} \\
A_{IB}^T& A_{BB}
\end{array}\right).
\end{equation}

In the process to solve for $u_h({\bf x})$ we use the finite element space $V^{pc,h}_{c,0}$ which is continuous in $\Omega_{h}$, however, discontinuous across $\partial\Omega_{h}$, thus we regard it a
conforming but hybrid version of DG and
continuous FEM. This method possesses some advantages as follows:

(i) The method leads to a linear algebraic system with the coefficient matrix $A$ in (\ref{Stiff_matrix}) that is symmetric and positive definite,
just as in the case using either the conforming DG or continuous FEM,
thus many efficient solvers suitable to such linear systems could still be used.

(ii) The method is asymptotically compatible: as shown in \cite{tian2014asymptotically}, as long as the finite element space
contains continuous piecewise linear functions (which is the case for our hybrid algorithm), the Galerkin finite element approximation is always
asymptotically compatible, and thus offers robust numerical discretizations to problems involving nonlocal interactions.

(iii) The method has optimal convergence rate provided that the solution is smooth on $\Omega$, that is $O(h^2)$ ($O(h)$) for error in $L^2$ ($H^1$) norm provided that the true solution $u\in H^2(\Omega)$. This result is in sharp contrast to the assumption given that in \cite{du2012analysis} where to insure the optimal convergence rate the true solution is required to be in $H^2(\Omega\cup\Omega_I)$ which generally not holds for the problem (\ref{nonlocal_diff_homo}).  Furthermore, it has optimal convergence rate for 1D case under very weak assumptions, and nearly optimal convergence rate for two dimensional (2D) case under some mild assumptions, as shown in the next section.

(iv) The method, in comparison with the direct use of DG in all discrete elements,  uses a smaller degree of freedoms. For example, the degree of freedoms is $n+1$ versus $2n$ for a mesh with $n+1$  nodes in 1D case, and $(n+1)^2$ versus $6n^2$ for  a uniform triangulation with $n^2$ nodes in 2D case.

\section{Theoretical Analysis}\label{section:Theoretical considerations}
We now provide further theoretical analysis on the new DG approximations. Given what has already
been discussed in (ii) of the above section,  the asymptotic compatibility is assured and we thus
focus on the case where the problems remain strictly nonlocal.

\subsection{Convergence}
The following convergence result describes the best approximation property of the finite-dimensional Ritz-Galerkin solution.
\begin{theorem}\label{Thm:Convergence}
If $\gamma({\bf x},{\bf y})$ satisfies (\ref{kernel_basic}) and (\ref{int_kernel}), $b({\bf x})\in C(\Omega)$, $u({\bf x})$ is the solution of (\ref{nonlocal_diff_homo}), $u_{h}({\bf x})$ is the solution of (\ref{modi_disc_var}). We define
\begin{equation*}
     \tilde{u}({\bf x},\Omega_h)=\left \{
     \begin{array}{ll}
     u({\bf x}), & {\bf x}\in \Omega_h,\\
     0, & {\bf x}\in (\Omega\cup\Omega_I)\setminus\Omega_{h}.
     \end{array}
     \right .
\end{equation*}
Then we have
\begin{equation}\label{cea0}
\|| \tilde{u}-u_{h}\||\leq \inf_{w_h \in V_{c,0}^{pc,h}}
\|| \tilde{u}-w_h\||\, .
\end{equation}
Consequently,
\begin{equation}\label{cea}
\|u-u_{h}\|_{\Omega_{h}}\leq C \min_{w_{h}\in V^{pc,h}_{c,0}}\|u-w_{h}\|_{\Omega_{h}}\rightarrow 0 \quad as \: h\rightarrow 0.
\end{equation}
\end{theorem}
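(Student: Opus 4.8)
The plan is to recognize the scheme as a genuine Ritz–Galerkin method for the symmetric nonlocal energy form and to obtain (\ref{cea0}) from Galerkin orthogonality together with the Cauchy--Schwarz inequality, exactly as in the classical C\'ea lemma for symmetric coercive problems; the $L^2$ estimate (\ref{cea}) will then follow from the equivalence of $\||\cdot\||$ and the $L^2$ norm on $V_{c,0}(\Omega\cup\Omega_I)$ established in \cite{du2012analysis}, and the convergence to zero from the continuity of $u$ given by Theorem \ref{Thm:Continuity_Zeroboundary} and standard approximation by piecewise linears.

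First I would record the symmetrization identity: for any $w$ and any $v$ vanishing on $(\Omega\cup\Omega_I)\setminus\Omega_h$, swapping ${\bf x}\leftrightarrow{\bf y}$ and using the symmetry of $\gamma$ gives $-\int_{\Omega_h}\mathcal{L}w\,v\,d{\bf x}=\tfrac12(w,v)_{\||}$. Applying this with $v=w_h\in V_{c,0}^{pc,h}$ rewrites the discrete equation (\ref{modi_disc_var}) as $\tfrac12(u_h,w_h)_{\||}=\int_{\Omega_h}b\,w_h\,d{\bf x}$ for all $w_h$. For the continuous side, the conforming property (\ref{Set_conf}) guarantees that $w_h\in V_{c,0}^{pc}(\Omega\cup\Omega_I)$ is an admissible test function in (\ref{modi_prob_var}); since $w_h$ is supported in $\Omega_h$ we have $\int_{\Omega}b\,w_h=\int_{\Omega_h}b\,w_h$, and the same identity applied to $u$ yields $\tfrac12(u,w_h)_{\||}=\int_{\Omega_h}b\,w_h\,d{\bf x}$. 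Because $u$ vanishes on $\Omega_I$ and the triangulation conforms to $\Omega$, so that $(\Omega\cup\Omega_I)\setminus\Omega_h=\Omega_I$ up to a null set, the zero-extension $\tilde u$ coincides with $u$ almost everywhere; hence $(\tilde u,w_h)_{\||}=(u,w_h)_{\||}$ and, subtracting, $(\tilde u-u_{h},w_h)_{\||}=0$ for every $w_h\in V_{c,0}^{pc,h}$.

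With this orthogonality, (\ref{cea0}) is immediate. Since $u_{h}-w_h\in V_{c,0}^{pc,h}$, I would write $\||\tilde u-u_{h}\||^2=(\tilde u-u_{h},\tilde u-u_{h})_{\||}=(\tilde u-u_{h},\tilde u-w_h)_{\||}\le\||\tilde u-u_{h}\||\,\||\tilde u-w_h\||$ by Cauchy--Schwarz, cancel one factor, and take the infimum over $w_h$. (The norm equivalence also makes $(\cdot,\cdot)_{\||}$ a genuine inner product on $V_{c,0}$, so $\||\cdot\||$ is a norm and the finite-dimensional discrete system has a unique solution $u_{h}$.) To pass to (\ref{cea}) I would invoke the equivalence $c_1\|v\|_{L^2(\Omega\cup\Omega_I)}\le\||v\||\le c_2\|v\|_{L^2(\Omega\cup\Omega_I)}$ on $V_{c,0}(\Omega\cup\Omega_I)$ from \cite{du2012analysis}; applying the lower bound to $\tilde u-u_{h}$ and the upper bound to $\tilde u-w_h$ (all in $V_{c,0}$) turns (\ref{cea0}) into $\|\tilde u-u_{h}\|_{L^2}\le C\min_{w_h}\|\tilde u-w_h\|_{L^2}$ with $C=c_2/c_1$. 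As $\tilde u$, $u_{h}$ and $w_h$ all vanish off $\Omega_h$ and $\tilde u=u$ on $\Omega_h$, both $L^2$ norms reduce to norms over $\Omega_h$, giving $\|u-u_{h}\|_{\Omega_h}\le C\min_{w_h}\|u-w_h\|_{\Omega_h}$. Finally, Theorem \ref{Thm:Continuity_Zeroboundary} gives $u\in C(\Omega)$, so piecewise-linear nodal approximation (with the boundary nodes free to match the trace of $u$ on $\partial\Omega$) makes $\min_{w_h}\|u-w_h\|_{\Omega_h}\to0$ as $h\to0$, and the convergence follows.

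The step I expect to be the main obstacle is the Galerkin orthogonality, specifically the bookkeeping certifying that the discrete test functions are admissible in the continuous weak form and that $\tilde u$ reproduces the data $\int_{\Omega_h}b\,w_h$ exactly. This is precisely where the nonstandard \emph{conforming} nature of the method is decisive: because the nonlocal energy space requires only $L^2$-type regularity with finite energy, piecewise-linear functions that jump across $\partial\Omega$ still lie in $V_{c,0}^{pc}(\Omega\cup\Omega_I)$, so none of the consistency terms that plague DG for second-order elliptic PDEs appear, and the clean best-approximation bound (\ref{cea0}) holds. The symmetrization identity and the support considerations making $\int_{\Omega}b\,w_h=\int_{\Omega_h}b\,w_h$ must be checked with care regarding the interplay of $\Omega$, $\Omega_h$ and $\Omega_I$, but once these are in place the remainder is routine Hilbert-space projection and approximation theory.
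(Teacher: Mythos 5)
Your proposal is correct and takes essentially the same route as the paper's proof: conformity \eqref{Set_conf} gives the Galerkin orthogonality $(\tilde u-u_h,w_h)_{\||}=0$ (the paper obtains it by citing the nonlocal Green's first identity of \cite{du2013nonlocal}, where you derive it directly by symmetrizing the double integral), Cauchy--Schwarz then yields \eqref{cea0}, and the equivalence of $V_{c,0}(\Omega\cup\Omega_I)$ with $L^2_{c,0}(\Omega\cup\Omega_I)$ from \cite{du2012analysis} converts this into \eqref{cea}. The remaining differences are presentational --- you identify $\tilde u$ with $u$ almost everywhere instead of working with $\tilde u$ throughout, which rests on the same implicit assumption the paper makes, namely that $\mathcal{T}_h$ triangulates $\Omega$ exactly so that $\Omega_h$ and $\Omega$ agree up to a null set.
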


\begin{proof}
Since $V^{pc,h}_{c,0}\subset V^{pc}_{c,0}(\Omega\cup\Omega_I)$ as in (\ref{Set_conf}), then for all $w_h \in V_{c,0}^{pc,h}$,
\begin{equation*}
-\int_{\Omega_{h}}\mathcal{L}\tilde{u}({\bf x},\Omega_h) w_h({\bf x})d{\bf x}=\int_{\Omega_{h}}b({\bf x})w_h({\bf x})d{\bf x},
\end{equation*}
together with (\ref{modi_disc_var}), we have
\begin{equation*}
-\int_{\Omega_{h}}\mathcal{L}{\big(}\tilde{u}({\bf x},\Omega_h)-u_{h}({\bf x}){\big)} w_h({\bf x})d{\bf x}=0, \: \forall w_h \in V_{c,0}^{pc,h}.
\end{equation*}
Using the nonlocal Green's first identity \cite{du2013nonlocal}, we have
\begin{equation*}
(\tilde{u}-u_{h},w_h)_{\||}=0,\:\forall w_h \in V_{c,0}^{pc,h}.
\end{equation*}
Then we get the following estimate
\begin{align*}
\|| \tilde{u}-u_{h}\||^2 &=(\tilde{u}-u_{h},\tilde{u}-u_{h})_{\||}=(\tilde{u}-u_{h},\tilde{u}-w_h)_{\||}\\
&\leq \|| \tilde{u}-u_{h}\|| \|| \tilde{u}-w_h\||, \: \forall w_h({\bf x}) \in V_{c,0}^{pc,h},
\end{align*}
and then
\begin{align*}
\|| \tilde{u}-u_{h}\||\leq\|| \tilde{u}-w_h\||, \: \forall w_h({\bf x}) \in V_{c,0}^{pc,h}.
\end{align*}
By the equivalence between $V_{c,0}(\Omega\cup\Omega_I)$ and $L^{2}_{c,0}(\Omega\cup\Omega_I)$, we complete the proof.
\end{proof}

Let us note that due to the use of norm  equivalence in the above proof, generally speaking, the constant $C$ in the lemma could depend on the nonlocal space and thus the nonlocal kernel. One may not infer that this constant remains  uniformly bounded when we consider the local limit of the nonlocal problem. Fortunately, as
alluded to earlier, with the asymptotic compatibility already established in \cite{tian2014asymptotically},
we hereby only focus on the strict nonlocal case.

We now combine the theory of the interpolation error estimate and (\ref{cea}) to give the convergence rate estimate.
\begin{theorem}\label{Thm:Convergence_rate}
If $\gamma({\bf x},{\bf y})$ satisfies (\ref{kernel_basic}) and (\ref{int_kernel}), $b({\bf x})\in C(\Omega)$, $u({\bf x})$ is the solution of (\ref{nonlocal_diff_homo}), $u_{h}({\bf x})$ is the solution of (\ref{modi_disc_var}).
Suppose that $u\in H^t(\Omega)$ holds, there exists a constant $C$ such that, for sufficiently small $h$,
\begin{equation}\label{Error_u}
\|u-u_{h}\|_{\Omega_{h}}\leq C h^s\|u\|_{s,\Omega},
\end{equation}
with $s=\min(t,2)>d/2$. If $s>1$
\begin{equation}\label{Error_du}
\|\nabla(u-u_{h})\|_{\Omega_{h}}\leq C h^{s-1}\|u\|_{s,\Omega}.
\end{equation}
Moreover, if the following two conditions hold:

(i) $b({\bf x})\in C^1(\Omega)$, $b\in H^{2}(\Omega_{1})$, $b\in H^2(\Omega_{2})$;

(ii) $\gamma({\bf x},{\bf y})$ is a radial function such that $\tilde{\gamma}(r)\in C^1(0,\delta)$ and $\tilde{\gamma}(\delta)=0$.\\
Then $u\in H^2(\Omega)$, thus $s=2$.
\end{theorem}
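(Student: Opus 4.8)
The plan is to combine the quasi-optimal $L^2$ estimate already furnished by Theorem \ref{Thm:Convergence} with classical finite element interpolation theory, and then to upgrade to the gradient estimate through an inverse inequality. For the best-approximation term in (\ref{cea}) I would choose $w_h = I_h u \in V_{c,0}^{pc,h}$, the piecewise linear interpolant of $u$ built from its values at the nodes of $\mathcal{T}_h$ lying in $\overline{\Omega_h}$ and extended by zero on $(\Omega\cup\Omega_I)\setminus\Omega_h$. Because $V_{c,0}^{pc,h}$ permits a jump across $\partial\Omega$, the boundary nodal values may be taken as the interior traces $u({\bf x}-)$, so $I_h u$ is a genuine member of the discrete space. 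The hypothesis $s>d/2$ enters precisely here: by the Sobolev embedding $H^s(\Omega)\hookrightarrow C(\overline{\Omega})$ the point values of $u$ are well defined (consistent with $u\in C(\Omega)$ from Theorem \ref{Thm:Continuity_Zeroboundary}), so the nodal interpolant makes sense.

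With this choice, (\ref{cea}) gives $\|u-u_h\|_{\Omega_h}\leq C\|u-I_h u\|_{\Omega_h}$, and the standard interpolation estimate $\|u-I_h u\|_{\Omega_h}\leq C h^s\|u\|_{s,\Omega}$ on shape-regular meshes yields (\ref{Error_u}) at once. The exponent is $s=\min(t,2)$ because the interpolation order saturates at the polynomial degree plus one, namely $2$ for piecewise linears.

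For the gradient bound (\ref{Error_du}) the difficulty is that the Galerkin orthogonality underlying Theorem \ref{Thm:Convergence} lives in the nonlocal energy norm $\||\cdot\||$, which is equivalent only to the $L^2$ norm and is therefore too weak to control $\|\nabla(\cdot)\|_{\Omega_h}$ directly. The key step, and the main obstacle, is to recover one order of differentiation by an inverse inequality, legitimate because $\mathcal{T}_h$ is assumed quasi-uniform. I would split $\nabla(u-u_h)=\nabla(u-I_h u)+\nabla(I_h u-u_h)$; the first term is controlled by the interpolation estimate $\|\nabla(u-I_h u)\|_{\Omega_h}\leq C h^{s-1}\|u\|_{s,\Omega}$, while for the second, since $I_h u-u_h$ is piecewise linear, the inverse inequality gives $\|\nabla(I_h u-u_h)\|_{\Omega_h}\leq C h^{-1}\|I_h u-u_h\|_{\Omega_h}$. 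A triangle inequality together with the already proven (\ref{Error_u}) and the interpolation estimate bounds $\|I_h u-u_h\|_{\Omega_h}$ by $C h^s\|u\|_{s,\Omega}$, so the second term is again $O(h^{s-1})$; adding the two pieces gives (\ref{Error_du}). Note that $s>1$ is exactly what guarantees a positive convergence order here.

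Finally, the last assertion is a direct appeal to the regularity theory already developed: under hypotheses (i) and (ii), Corollary \ref{Corollary:H2} gives $u\in H^2(\Omega)$, so $t\geq 2$ and hence $s=\min(t,2)=2$, placing the two preceding estimates at their optimal rates $O(h^2)$ and $O(h)$.
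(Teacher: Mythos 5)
Your proposal is correct and follows essentially the same route as the paper's own proof: the Lagrange interpolant extended by zero as the competitor in (\ref{cea}), the standard interpolation estimate for (\ref{Error_u}), the inverse inequality plus triangle inequality for (\ref{Error_du}), and Corollary \ref{Corollary:H2} for the final regularity claim. Your added remarks (the role of $s>d/2$ via Sobolev embedding in making nodal values well defined, and the explicit bound on $\|\mathcal{I}_h u - u_h\|_{\Omega_h}$) merely spell out details the paper leaves implicit.
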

\begin{proof}
Denote by $\mathcal{I}_hu$ the Lagrange interpolant from $C(\Omega_h)$ to $V_{c,0}^{pc,h}|_{\Omega_h}$\begin{equation*}
     w_{h}({\bf x})=\left \{
     \begin{array}{ll}
     \mathcal{I}_hu({\bf x}), & {\bf x}\in \Omega_h,\\
     0, & {\bf x}\in (\Omega\cup\Omega_I)\setminus\Omega_{h},
     \end{array}
     \right .
\end{equation*}
then $w_{h}\in V^{pc,h}_{c,0}$, and
\begin{equation}\label{Interp_err}
\|u-w_{h}\|_{\Omega_{h}}=\|u-\mathcal{I}_hu\|_{\Omega_{h}}\leq C h^{s}\|u\|_{s,\Omega},
\end{equation}
with $s=\min(t,2)$.
Combination of (\ref{cea}) and (\ref{Interp_err}) leads to (\ref{Error_u}).

Using the inverse estimate for finite element space, we have
\begin{align*}
\|\nabla(u-u_{h})\|_{\Omega_{h}}
&\leq\|\nabla(\mathcal{I}_hu-u_{h})\|_{\Omega_{h}}+\|\nabla(u-\mathcal{I}_hu)\|_{\Omega_{h}}\\
&\leq Ch^{-1}\|\mathcal{I}_hu-u_{h}\|_{\Omega_{h}}+ C h^{s-1}\|u\|_{s,\Omega}\leq C h^{s-1}\|u\|_{s,\Omega}
\end{align*}
This is the desired result (\ref{Error_du}).

The conditions (i) and (ii) lead to $u\in H^{2}(\Omega)$ due to Corollary \ref{Corollary:H2}. 
\end{proof}

We recall by Theorem 6.2 in \cite{du2012analysis} that, when continuous FEM is used to approximate the nonlocal problem (\ref{nonlocal_diff_homo}), the approximation $u_{n}$ has an error estimate of the form
$\|u-u_{n}\|_{\Omega\cup\Omega_I}\leq C h^s\|u\|_{s,\Omega\cup\Omega_I}$.
Since the solution of nonlocal problem (\ref{nonlocal_diff_homo}) could be discontinuous across $\partial\Omega$, we see that $u\in H^{s}(\Omega\cup\Omega_I)$ does not hold for $s\geq 1/2$, let alone for $s=2$. For 1D case, the best to expect is $s=1/2-\epsilon$ for arbitrary small positive $\epsilon$  if continuous FEM is used. Theorem \ref{Thm:Convergence_rate} improves the convergence rate from $1/2-\epsilon$ to $3/2-\epsilon$ for this case since we have the regularity of $H^{3/2-\epsilon}(\Omega)$.
This convergence rate is still not optimal. If the points on $\partial\Omega_{1}$ are selected as the mesh grids, the optimal convergence rate could be obtained.
\begin{theorem}
For the 1D case, assume $u\in C(0,1)$, $u\in H^2(0,\delta)$, $u\in H^2(\delta,1-\delta)$, $u\in H^2(1-\delta,1)$. If $\delta$ and $1-\delta$ are all selected as the mesh grids, then there exists a constant $C$ such that, for sufficiently small $h$,
\begin{equation}\label{Error_1D}
\|u-u_{h}\|_{(0,1)}+h\|u'-u'_{h}\|_{(0,1)}\leq C h^2{\big(}\|u\|_{2,(0,\delta)}+\|u\|_{2,(\delta,1-\delta)}+\|u\|_{2,(1-\delta,1)}{\big)}.
\end{equation}
\end{theorem}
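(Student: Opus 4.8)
The plan is to reduce the global error to a piecewise interpolation error and then exploit the hypothesis that $\delta$ and $1-\delta$ are mesh nodes. First I would invoke the quasi-best approximation property (\ref{cea}) from Theorem \ref{Thm:Convergence}, which gives $\|u-u_h\|_{\Omega_h}\le C\min_{w_h\in V_{c,0}^{pc,h}}\|u-w_h\|_{\Omega_h}$, and then take $w_h=\mathcal{I}_h u$, the continuous piecewise linear Lagrange interpolant of $u$ on $\Omega_h$. Since $u\in C(0,1)$, the nodal values (including the one-sided boundary values $u(0+)$ and $u(1-)$) are well defined, so $\mathcal{I}_h u$ is continuous in $\Omega_h$ and vanishes on $\Omega_I$, i.e. $\mathcal{I}_h u\in V_{c,0}^{pc,h}$, making it an admissible competitor. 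Thus the task is to estimate $\|u-\mathcal{I}_h u\|$ and its derivative.

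The crucial observation is that, because $\delta$ and $1-\delta$ are selected as grid points, every element of $\mathcal{T}_h$ lies entirely within one of the three subintervals $(0,\delta)$, $(\delta,1-\delta)$, $(1-\delta,1)$ on which $u$ is assumed to be $H^2$; no element straddles a regularity interface. Hence the standard local estimate $\|u-\mathcal{I}_h u\|_{L^2(K)}\le C h_K^2|u|_{H^2(K)}$ applies on each element $K$, and summing over the three groups of elements yields $\|u-\mathcal{I}_h u\|_{(0,1)}\le C h^2 M$, where $M:=\|u\|_{2,(0,\delta)}+\|u\|_{2,(\delta,1-\delta)}+\|u\|_{2,(1-\delta,1)}$. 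Combined with (\ref{cea}) this already delivers $\|u-u_h\|_{(0,1)}\le C h^2 M$, the $L^2$ part of (\ref{Error_1D}).

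For the derivative term I would argue exactly as in the proof of Theorem \ref{Thm:Convergence_rate}, via a triangle inequality together with the inverse estimate permitted by quasi-uniformity. Writing $\|u'-u_h'\|_{(0,1)}\le\|u'-(\mathcal{I}_h u)'\|_{(0,1)}+\|(\mathcal{I}_h u)'-u_h'\|_{(0,1)}$, the first term is bounded by $C h M$ from the piecewise estimate $\|(u-\mathcal{I}_h u)'\|_{L^2(K)}\le C h_K|u|_{H^2(K)}$, again summed over the three subintervals. For the second term, $(\mathcal{I}_h u)'-u_h'$ is the derivative of a finite element function, so the inverse inequality gives $\|(\mathcal{I}_h u)'-u_h'\|\le C h^{-1}\|\mathcal{I}_h u-u_h\|\le C h^{-1}(\|\mathcal{I}_h u-u\|+\|u-u_h\|)\le C h M$, using the $L^2$ bounds just obtained. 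Multiplying the resulting $O(h)$ estimate for $\|u'-u_h'\|$ by $h$ gives the required $O(h^2)$ contribution, completing (\ref{Error_1D}).

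The main obstacle, and the only place the special hypothesis is used, is guaranteeing the full $O(h^2)$ interpolation rate even though $u$ is merely piecewise $H^2$ with possible derivative jumps at $\delta$ and $1-\delta$. This is precisely what the alignment of the mesh with $\delta$ and $1-\delta$ buys: without it, an element crossing $\delta$ or $1-\delta$ would see only reduced regularity and the local interpolation error would degrade, capping the global rate well below $h^2$. Everything else is routine, being the one-dimensional specialization of the interpolation-plus-inverse-estimate argument behind Theorem \ref{Thm:Convergence_rate}, with the norm-equivalence constant from Theorem \ref{Thm:Convergence} absorbed into $C$.
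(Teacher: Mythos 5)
Your proposal is correct and follows essentially the same route as the paper: piecewise Lagrange interpolation error estimates on the three subintervals $(0,\delta)$, $(\delta,1-\delta)$, $(1-\delta,1)$ (made valid by the mesh alignment hypothesis), combined with the quasi-best approximation property (\ref{cea}) for the $L^2$ bound and the inverse estimate for the $H^1$ bound. You have merely spelled out the details that the paper's terse proof leaves implicit.
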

\begin{proof}
Using the interpolation error estimate in three intervals $(0,\delta)$, $(\delta,1-\delta)$ and $(1-\delta,1)$, respectively and add them together, we get
\begin{equation*}
\|u-\mathcal{I}_hu\|_{(0,1)}\leq C h^2{\big(}\|u\|_{2,(0,\delta)}+\|u\|_{2,(\delta,1-\delta)}+\|u\|_{2,(1-\delta,1)}{\big)}.
\end{equation*}
Together with (\ref{cea}) and the inverse estimate we get (\ref{Error_1D}). 
\end{proof}

For 2D case, under mild assumptions on the smoothness of the boundary, the regularity of the solution, and the conformity between the mesh and the boundary, we have the almost optimal convergence rate, that is optimal up to a factor $|\log h|^{1/2}$.
\begin{theorem}
For 2D case, assume $u\in C(\Omega)$, $u\in H^{2}(\Omega_{1})$, $u\in H^2(\Omega_{2})$. If $\partial\Omega$ is of class $C^2$, and $N\!B\subset\partial \Omega_{1}$, then there exists a constant $C$ such that, for sufficiently small $h$,
\begin{equation*}
\|u-u_{h}\|_{\Omega_{h}}+h\|\nabla(u-u_{h})\|_{\Omega_{h}}\leq C h^2|\log h|^{1/2}{\big(}\|u\|_{2,\Omega_{1}}+\|u\|_{2,\Omega_{2}}{\big)}.
\end{equation*}
\end{theorem}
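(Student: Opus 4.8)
The plan is to reduce the error to a pure interpolation error and then localize all of the loss to a thin tube around the interface curve $\partial\Omega_{1}$, where the solution ceases to be globally $H^{2}$. The estimate (\ref{cea}) of Theorem \ref{Thm:Convergence} already provides $\|u-u_{h}\|_{\Omega_{h}}\leq C\min_{w_{h}\in V^{pc,h}_{c,0}}\|u-w_{h}\|_{\Omega_{h}}$, so it suffices to produce one good competitor, and I would take $w_{h}=\mathcal{I}_hu$, the piecewise-linear Lagrange interpolant extended by zero off $\Omega_h$, exactly as in the proof of Theorem \ref{Thm:Convergence_rate}. Because $\partial\Omega$ is of class $C^{2}$ and $\Omega$ is convex, the inner parallel curve $\partial\Omega_{1}$ (points at distance $\delta$) is again $C^{2}$ for small $\delta$, and the hypothesis $N\!B\subset\partial\Omega_{1}$ forces the relevant mesh nodes to lie on this interface, so the triangulation is fitted to $\partial\Omega_{1}$ up to the unavoidable $O(h^{2})$ chord-versus-curve mismatch. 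The whole task is then to bound $\|u-\mathcal{I}_hu\|_{\Omega_{h}}$, after which the scaled gradient term $h\|\nabla(u-\mathcal{I}_hu)\|_{\Omega_{h}}$ follows from the inverse inequality precisely as in Theorem \ref{Thm:Convergence_rate}.

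Next I would split the elements meeting $\overline{\Omega}$ into two families: the \emph{regular} elements, each contained in $\overline{\Omega_{1}}$ or in $\overline{\Omega_{2}}$, on which $u$ is genuinely $H^{2}$; and the \emph{interface layer}, the $O(1/h)$ elements abutting $\partial\Omega_{1}$, whose union has area $O(h)$. On the regular elements the classical local interpolation estimate applies piece by piece, and after summation it contributes the fully optimal $Ch^{2}\big(\|u\|_{2,\Omega_{1}}+\|u\|_{2,\Omega_{2}}\big)$ in $L^{2}$ (and $Ch(\cdots)$ in the gradient). Thus the entire question of the convergence rate is concentrated in the interface layer, where $u$ is merely continuous and carries a jump $[\partial_{n}u]$ of its normal derivative across the curved $\partial\Omega_{1}$.

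The heart of the argument, and the step I expect to be the main obstacle, is this interface layer. A crude bound that treats $[\partial_{n}u]$ as an $L^{\infty}$ datum produces only $O(h^{3/2})$ and is too weak; the gain must come from the fact that, by the trace theorem, $[\partial_{n}u]\in H^{1/2}(\partial\Omega_{1})$ with norm controlled by $\|u\|_{2,\Omega_{1}}+\|u\|_{2,\Omega_{2}}$. I would estimate the per-element error by $\|u-\mathcal{I}_hu\|_{L^{\infty}(K)}\,|K|^{1/2}$ and control the $L^{\infty}$ factor through the borderline two-dimensional embedding $H^{2}\hookrightarrow W^{1,p}$ (equivalently $H^{1/2}(\partial\Omega_{1})\hookrightarrow L^{p}$), whose constant grows like $\sqrt{p}$; balancing the resulting factor $h^{-2/p}$ by choosing $p\sim|\log h|$ turns the logarithmic failure of the endpoint embedding into the factor $|\log h|^{1/2}$. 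Summing the per-element contributions over the $O(1/h)$ layer elements and using finite overlap so that the local $H^{2}$ norms reassemble into $\|u\|_{2,\Omega_{1}}+\|u\|_{2,\Omega_{2}}$ then yields the stated $O(h^{2}|\log h|^{1/2})$ bound, the $O(h^{3})$ contribution of the curved slivers being lower order and absorbed. The gradient term is recovered by inserting $\mathcal{I}_hu$, applying the inverse inequality to $\mathcal{I}_hu-u_{h}$, and invoking the $L^{2}$ bound just obtained, which reproduces the same rate with the extra factor $h$. The delicate point throughout is making the logarithmic Sobolev constant explicit in $h$ and checking that the summation over the shrinking tube does not sacrifice the optimal power of $h$.
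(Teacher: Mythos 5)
Your overall route coincides with the paper's: reduce to the best-approximation property (\ref{cea}) from Theorem \ref{Thm:Convergence}, take the fitted Lagrange interpolant $\mathcal{I}_hu$ (extended by zero off $\Omega_h$) as the competitor, and recover the gradient bound from the inverse inequality exactly as in Theorem \ref{Thm:Convergence_rate}. The difference is that the paper's entire treatment of the interpolation step is a citation: it quotes from \cite{chen1998finite} the interface-interpolation estimate $\|u-\mathcal{I}_hu\|_{\Omega_h}\leq Ch^2|\log h|^{1/2}\big(\|u\|_{2,\Omega_1}+\|u\|_{2,\Omega_2}\big)$ and stops, whereas you attempt to prove that estimate. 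Your diagnosis of where the logarithm comes from --- the borderline two-dimensional embedding $H^1\hookrightarrow L^p$ (hence $H^2\hookrightarrow W^{1,p}$, and $H^{1/2}(\partial\Omega_1)\hookrightarrow L^p$) with constant $\sqrt{p}$, optimized at $p\sim|\log h|$ --- is exactly the mechanism inside Chen and Zou's proof, so the key idea is right, and your reading of the hypothesis $N\!B\subset\partial\Omega_1$ as ``the mesh is fitted to the interface'' is the correct one.

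However, the assembly you describe does not deliver the stated rate. If on each layer element $K$ you bound $\|u-\mathcal{I}_hu\|_{L^2(K)}\leq\|u-\mathcal{I}_hu\|_{L^\infty(K)}|K|^{1/2}$ and control the $L^\infty$ factor only through $W^{1,p}$ regularity (which is all $u$ has on an element straddling the curved $\partial\Omega_1$), you get a per-element bound of order $h^{2-2/p}\|\nabla u\|_{L^p(K)}$; summing the squares of local $L^p$ norms over the $N\sim 1/h$ layer elements costs, by H\"{o}lder, a factor $N^{1-2/p}$, and the total is $O\big(h^{3/2-1/p}\sqrt{p}\,\big)=O\big(h^{3/2}|\log h|^{1/2}\big)$ --- the very rate you set out to beat. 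The cure (and the actual content of \cite{chen1998finite}) is to localize the loss not to the whole layer (area $O(h)$) but to the chord-versus-curve slivers (total area $O(h^2)$), via two-sided $H^2$ extensions: let $u_1,u_2$ be $H^2$ extensions of $u|_{\Omega_1}$, $u|_{\Omega_2}$. Since every vertex of a fitted interface element lies in $\overline{\Omega_1}$ (or symmetrically $\overline{\Omega_2}$), one has $\mathcal{I}_hu=\mathcal{I}_hu_1$ there, hence $u-\mathcal{I}_hu=(u_1-\mathcal{I}_hu_1)+(u_2-u_1)\chi_{\mathrm{sliver}}$. The first term is standard because $u_1\in H^2$; only the second term --- supported on the slivers and vanishing on $\partial\Omega_1$ --- needs the H\"{o}lder/$L^p$ argument with $p\sim|\log h|$ (for the gradient) and a thin-direction Poincar\'{e} inequality exploiting the $O(h^2)$ sliver width (for the $L^2$ norm). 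So the slivers are not a ``lower order'' contribution to be absorbed, as you assert: they are precisely where the $|\log h|^{1/2}$ arises, while the rest of the layer is harmless because $u$ is genuinely $H^2$ on each element there. Without this extension/sliver decomposition your argument stalls at $O(h^{3/2})$; with it, it becomes a correct self-contained proof of the lemma that the paper simply cites.
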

\begin{proof}
We cite from \cite{chen1998finite} the results for the linear interpolation error estimate for the interface problem, that is
\begin{equation*}
\|u-\mathcal{I}_hu\|_{\Omega_{h}}\leq C h^2|\log h|^{1/2}{\big(}\|u\|_{2,\Omega_{1}}+\|u\|_{2,\Omega_{2}}{\big)}.
\end{equation*}
Together with (\ref{cea}) and the inverse estimate we complete the proof. 
\end{proof}
\begin{remark}\label{Remark:kernel}
Due to the structure of the solution for the problem (\ref{nonlocal_diff_homo}) we have discussed, the solution $u$ is often discontinuous across $\partial\Omega$. It may cause the discontinuity of the first derivative across $\partial\Omega_{1}$ if $\tilde{\gamma}(r)$ has a discontinuity at $r=\delta$ (that is $\tilde{\gamma}(\delta)>0$), or the discontinuity of the second derivative across $\partial\Omega_{1}$ if $\tilde{\gamma}(r)$ is continuous at $r=\delta$ (that is $\tilde{\gamma}(\delta)=0$) but $\tilde{\gamma}'(r)$ has a discontinuity at $r=\delta$ (that is $\tilde{\gamma}'(\delta-)\neq 0$). In the next section, we will discuss two kinds of kernels representing the above two cases, respectively.
\end{remark}

\subsection{Condition number estimate}
The condition number of the stiffness matrix is an indicator of the sensitivity of the discrete solution with respect to the data and the performance of iterative solvers such as the conjugate-gradient method. For the DG method we propose in Section \ref{section:DG_method}, consider the $(m+n)\times(m+n)$ stiffness matrix $A$ defined in (\ref{Stiff_matrix}). We have the following condition number estimate whose proof is standard and is given for completeness.  Similar discussions can be found in earlier studies\cite{AU14,du10sinum}.
\begin{theorem}
For the stiffness matrix $A$ defined in (\ref{Stiff_matrix}) associated with the kernel $\gamma({\bf x},{\bf y})$ that satisfies (\ref{kernel_basic}) and (\ref{int_kernel}),
there exists a constant $C$ such that
\begin{equation*}
{\rm cond}_{2}(A)\leq C.
\end{equation*}
\end{theorem}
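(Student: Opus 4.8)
The plan is to exploit the fact that, because the kernel is integrable, the nonlocal energy norm $\|| \cdot \||$ is equivalent to the $L^2$ norm on the constrained space. Spectrally this means the stiffness matrix $A$ behaves like a (scaled) \emph{mass} matrix rather than like the stiffness matrix of a differential operator, and it is exactly this that keeps the condition number bounded. Since $A$ is symmetric positive definite, it suffices to bound $\lambda_{\max}(A)$ and $\lambda_{\min}(A)$ from above and below by the \emph{same} power of $h$ and take the ratio.

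First I would identify the quadratic form carried by $A$. Writing $u_h=\sum_{j=1}^{m+n}u_j\widehat{\phi}_j$ with coefficient vector ${\bf u}$, the nonlocal Green's first identity \cite{du2013nonlocal} used in the proof of Theorem \ref{Thm:Convergence} gives $a_{i,j}=(\widehat{\phi}_i,\widehat{\phi}_j)_{\||}$, so that
\[
{\bf u}^T A\,{\bf u}=\sum_{i,j}u_i u_j\,(\widehat{\phi}_i,\widehat{\phi}_j)_{\||}=\||\, u_h\,\||^2 .
\]
Next I would invoke the norm equivalence between $V_{c,0}(\Omega\cup\Omega_I)$ and $L^2_{c,0}(\Omega\cup\Omega_I)$ proved in \cite{du2012analysis} under (\ref{kernel_basic})--(\ref{int_kernel}): there are constants $c_1,c_2>0$ depending only on the kernel, hence independent of $h$, with $c_1\|u_h\|_{\Omega_h}^2\le \||\,u_h\,\||^2\le c_2\|u_h\|_{\Omega_h}^2$, where I used that $u_h$ vanishes on $(\Omega\cup\Omega_I)\setminus\Omega_h$ so its $L^2(\Omega\cup\Omega_I)$ norm reduces to its $L^2(\Omega_h)$ norm. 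Introducing the mass matrix $M=(m_{i,j})$ with $m_{i,j}=\int_{\Omega_h}\widehat{\phi}_i\widehat{\phi}_j\,d{\bf x}$, so that $\|u_h\|_{\Omega_h}^2={\bf u}^T M\,{\bf u}$, this reads $c_1\,{\bf u}^T M\,{\bf u}\le {\bf u}^T A\,{\bf u}\le c_2\,{\bf u}^T M\,{\bf u}$.

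Then I would apply the standard scaling of the mass matrix on a shape-regular, quasi-uniform triangulation \cite{brenner2007mathematical}: there exist $C_1,C_2>0$ independent of $h$ with $C_1 h^d\|{\bf u}\|_{\ell^2}^2\le {\bf u}^T M\,{\bf u}\le C_2 h^d\|{\bf u}\|_{\ell^2}^2$. Chaining the two sets of inequalities yields
\[
c_1 C_1\,h^d\,\|{\bf u}\|_{\ell^2}^2\le {\bf u}^T A\,{\bf u}\le c_2 C_2\,h^d\,\|{\bf u}\|_{\ell^2}^2\qquad\forall\,{\bf u},
\]
whence $\lambda_{\min}(A)\ge c_1 C_1 h^d$ and $\lambda_{\max}(A)\le c_2 C_2 h^d$, and therefore ${\rm cond}_2(A)=\lambda_{\max}(A)/\lambda_{\min}(A)\le (c_2 C_2)/(c_1 C_1)=:C$, independent of $h$.

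As for where the real content lies: there is no serious analytic obstacle, which is why the statement can be called routine—the Gram-matrix identity and the mass-matrix scaling are both standard. The one decisive ingredient is the energy-to-$L^2$ norm equivalence, valid precisely because of the integrability condition (\ref{int_kernel}); it forces the factor $h^d$ to appear identically in the bounds for $\lambda_{\max}$ and $\lambda_{\min}$ so that it cancels in the ratio. This is exactly what distinguishes the integrable-kernel case from the local second-order elliptic case, where the energy norm is equivalent instead to the $H^1$ norm and the mismatch with $L^2$ produces the familiar $O(h^{-2})$ growth. The only care required is to confirm that $c_1,c_2$ are genuinely $h$-independent, which holds since they are properties of the continuous spaces $V_{c,0}(\Omega\cup\Omega_I)$ and $L^2_{c,0}(\Omega\cup\Omega_I)$.
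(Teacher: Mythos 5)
Your proof is correct and follows essentially the same route as the paper's: the paper likewise combines the energy--$L^2$ norm equivalence for integrable kernels from \cite{du2012analysis} with the standard $h^d$ scaling between $\|w_h\|^2$ and the Euclidean norm of the coefficient vector, so that both extreme eigenvalues scale identically and the ratio is bounded. Your write-up merely makes explicit the intermediate steps (the Gram-matrix identity via the nonlocal Green's identity, the mass matrix, and the eigenvalue bounds) that the paper leaves implicit.
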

\begin{proof}
For the given finite element nodal basis, there exist two generic constants $c_{2}\geq c_{1}>0$ such that
\begin{equation*}
c_{1}h^{d}|{\bf w}|^{2}\leq \|w_{h}\|^{2}\leq c_{2}h^{d}|{\bf w}|^{2}\,,\quad \forall w_{h}=\sum_{j=1}^{m+n}w_{j}\widehat{\phi}_j\in V_{c,0}^{pc,h},
\end{equation*}
where $\{w_{j}\}$, $j=1,2,\cdots, m+n$, are components of the vector ${\bf w}$.
Since the space $V_{c,0}(\Omega\cup\Omega_I)$ is equivalent to the space $L^{2}_{c,0}(\Omega\cup\Omega_I)$, we get the theorem immediately. 
\end{proof}

\begin{remark}
We note again that the constant $C$ may depend on the kernel,  as demonstrated in \cite{du10sinum}, hence the result is only meaningful for nonlocal problems with a fixed kernel that satisfies the assumptions (\ref{kernel_basic}) and (\ref{int_kernel}).
\end{remark}

\section{Numerical results}\label{section:numerical_experiment}
We now report results of numerical experiments which substantiate the theoretical analysis in Section \ref{section:Theoretical considerations}. For 1D case, the problem (\ref{nonlocal_diff_homo}) becomes the following form
\begin{equation}\label{1D_prob}
     \left \{
     \begin{array}{rl}
-\int_{-\delta}^{\delta}{\big(}u(x+s)-u(x){\big)}\gamma(s)ds&=b(x)\quad \mbox{on}\: (0,1), \\
u(x)&=0 \: \qquad \mbox{on} \:(-\delta,0]\cup[1,1+\delta).
     \end{array}
     \right .
\end{equation}
We solve the nonlocal problem first on uniform meshes and take $\delta$ to be a constant multiple of $h$ and reduce $h$ to check convergence and condition number properties of the proposed DG method, and then solve the problem on non-uniform meshes which are obtained by random disturbance to uniform meshes. Here we choose two popular examples of kernel functions representing two cases as discussed in Remark \ref{Remark:kernel}.

\subsection{Constant kernel function}
We first consider the following kernel function
\begin{equation}\label{Con_ker}
     \gamma(s)=\left \{
     \begin{array}{ll}
     (2\delta)^{-1}, & |s|\leq \delta,\\
     0, & |s|>\delta.
     \end{array}
     \right .
\end{equation}
Obviously $\gamma$ defined as (\ref{Con_ker}) is discontinuous at points $\pm\delta$, if $b$ is discontinuous at $x=0$ or $x=1$, the solution of (\ref{1D_prob}) will probably be (however, not necessarily) discontinuous in its first derivative at $x=\delta$ or $x=1-\delta$. In fact, in Example 1 the smoothness pick-up is beyond first order, that is, although $b$ is discontinuous at $x=1$, $u$ is
infinitely continuously differentiable at $x=1-\delta$. While in Example 2 the smoothness pick-up is only first order and could not be improved, that is, since $b$ is discontinuous at $x=0$ and $x=1$, the first derivative of $u$ is
is discontinuous at $x=\delta$ and $x=1-\delta$.

\noindent{\bf Example 1.} In order to get simpler benchmark
solutions, we calculate the right-hand side of (\ref{1D_prob}) based on an exact solution $u(x)=x^2,\: x\in \Omega=(0,1)$ and $u(x)=0,\: x\in \Omega_I=(-\delta,0)\cup(1,1+\delta)$, with kernel function (\ref{Con_ker}). This naturally leads to a $\delta$-dependent right-hand side $b(x)=b_{\delta}(x)$, see Figure \ref{fig:1} for the plots of $u(x)$ and $b(x)$. The DG method we proposed in Section \ref{section:DG_method} is used to discretize it with $\delta=0.4$.
\begin{figure}
\subfloat[$b(x)$]{\includegraphics[width=6cm]{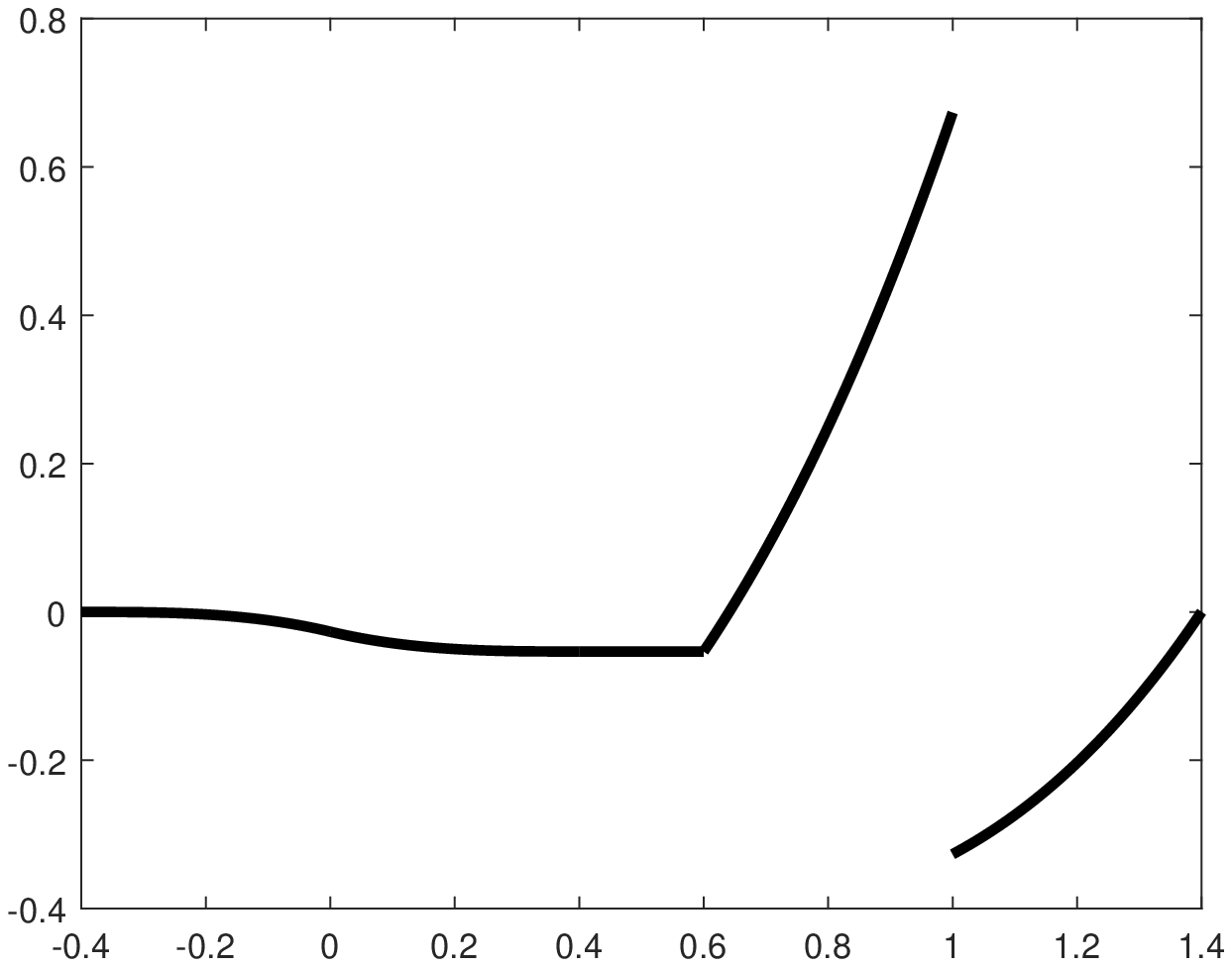}}
\subfloat[$u(x)$]{\includegraphics[width=6cm]{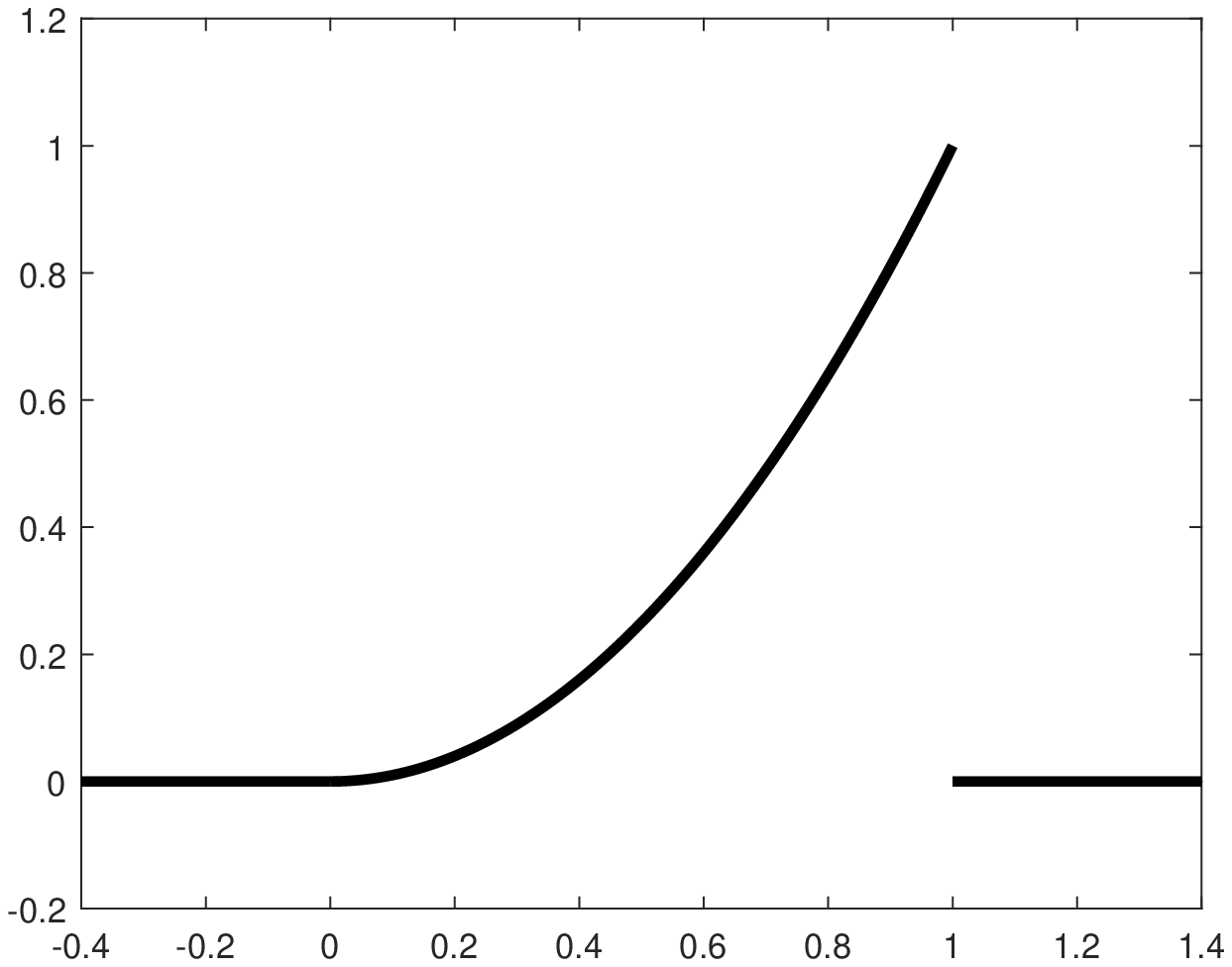}}
\caption{Example 1: The right hand side function and the exact solution.}\label{fig:1}
\end{figure}

We first use the proposed DG method on uniform meshes and conclude from Table \ref{table:ex1:uni} that convergence rates for errors in $L^2$ and $H^1$ norms are all optimal. The spectral condition number of the stiffness matrix is almost constant when the mesh size decreases, indicating the insensitivity of the discrete solution regardless how small $h$ is.
\begin{table}
\caption{Example 1: Errors in $L^2$ and $H^1$ norms, corresponding convergence rates and spectral condition numbers on uniform meshes, \emph{Cond} is abbreviation for the spectral condition number}\label{table:ex1:uni}
\begin{tabular}{lllllll}
\hline\noalign{\smallskip}
 $\delta/h$     &  4   & 8 & 16 & 32 & 64 & 128\\
 \noalign{\smallskip}\hline\noalign{\smallskip}
 $\|u-u_{h}\|$  &  7.45e-4 & 1.86e-4 & 4.66e-5 & 1.16e-5 & 2.91e-6 & 7.28e-7\\
 Rate    &  -- & 2.0000 &  2.0000 & 2.0000 & 2.0000 & 2.0000\\
 $\|u'-u'_{h}\|$ & 5.77e-2 & 2.89e-2 & 1.44e-2 & 7.22e-3 & 3.61e-3 & 1.80e-3\\
 Rate    &  -- & 1.0000 &  1.0000 & 1.0000 & 1.0000 & 1.0000\\
 Cond    &  5.8875 & 6.7743 &  6.9926 & 7.0294 & 7.0282 & 7.0225 \\
\noalign{\smallskip}\hline
\end{tabular}
\end{table}
We then use a kind of non-uniform meshes obtained by random disturbance to uniform meshes. To be specific, for a fixed $m$, let $h=\delta/m$, the non-uniform mesh is obtained by adding a random vector ${\bf \varepsilon}\in\mathbb{R}^{m-1}$ (which obeys the uniform distribution on $[-0.1h,0.1h]$) to $x_i$ to reach $x_i+\varepsilon_i,\:i=1,2,\cdots,m-1$. Together with $x_0$ and $x_m$ we get the new mesh grids
\begin{equation}\label{nonuni_grid}
x_i^n=x_i+\varepsilon_i,\:i=1,2,\cdots,m-1,\: x_0^n=x_0,\: x_m^n=x_m.
\end{equation}
We have done over twenty tests, and the convergence rates and the spectral condition numbers are all similar. Thus, instead of listing all of them, we just select one test to verify our theoretical analysis. Similar actions and presentations are made also in later examples. It is seen from Table \ref{table:ex1:non-uni} that the errors in $L^2$ and $H^1$ norms and convergence rates are comparable with that in uniform meshes case. This is consistent with the theoretical result in Theorem \ref{Thm:Convergence_rate} since $u\in C^{\infty}(\Omega)$, thus $s=2$. The spectral condition numbers of the stiffness matrices behave similar as in the uniform meshes case, too.
\begin{table}
\caption{Example 1: Errors in $L^2$ and $H^1$ norms, corresponding convergence rates and spectral condition numbers on non-uniform meshes (\ref{nonuni_grid})}\label{table:ex1:non-uni}
\begin{tabular}{llllll}
\hline\noalign{\smallskip}
 $\delta/h$     &  4   & 8 & 16 & 32 & 64\\
 \noalign{\smallskip}\hline\noalign{\smallskip}
$\|u-u_{h}\|$  &  8.04e-4 & 2.50e-4 & 6.00e-5 & 1.24e-5 & 3.14e-6  \\
Rate    &  --   & 1.6827 & 2.0629 & 2.2710 & 1.9846 \\
$\|u'-u'_{h}\|$ & 5.84e-2 & 2.95e-2 & 1.48e-2 & 7.30e-3 & 3.65e-3 \\
Rate    &  -- & 0.9848 & 0.9965 & 1.0179 & 0.9983 \\
Cond    &  5.9830 & 6.7507 & 7.0829 & 7.0786 & 7.0646\\
\noalign{\smallskip}\hline
\end{tabular}
\end{table}

\noindent{\bf Example 2.} We consider (\ref{1D_prob}) with kernel function (\ref{Con_ker}) and $b(x)=e^{x}$. The DG method we proposed in Section \ref{section:DG_method} is used to discretize it with $\delta=0.4$.
\begin{figure}
\subfloat[$b(x)$]{\includegraphics[width=6cm]{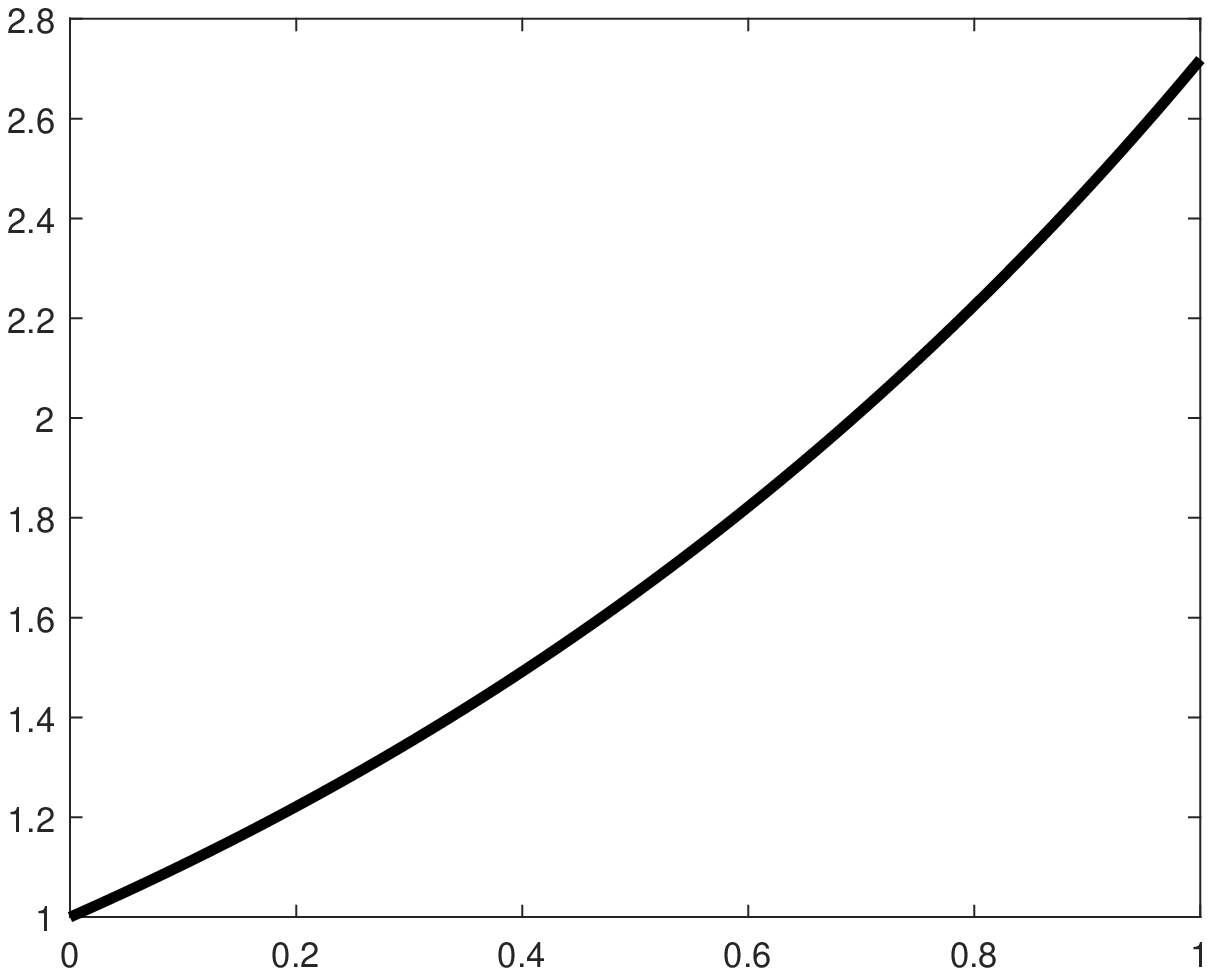}}
\subfloat[$u_{h}(x)$]{\includegraphics[width=6cm]{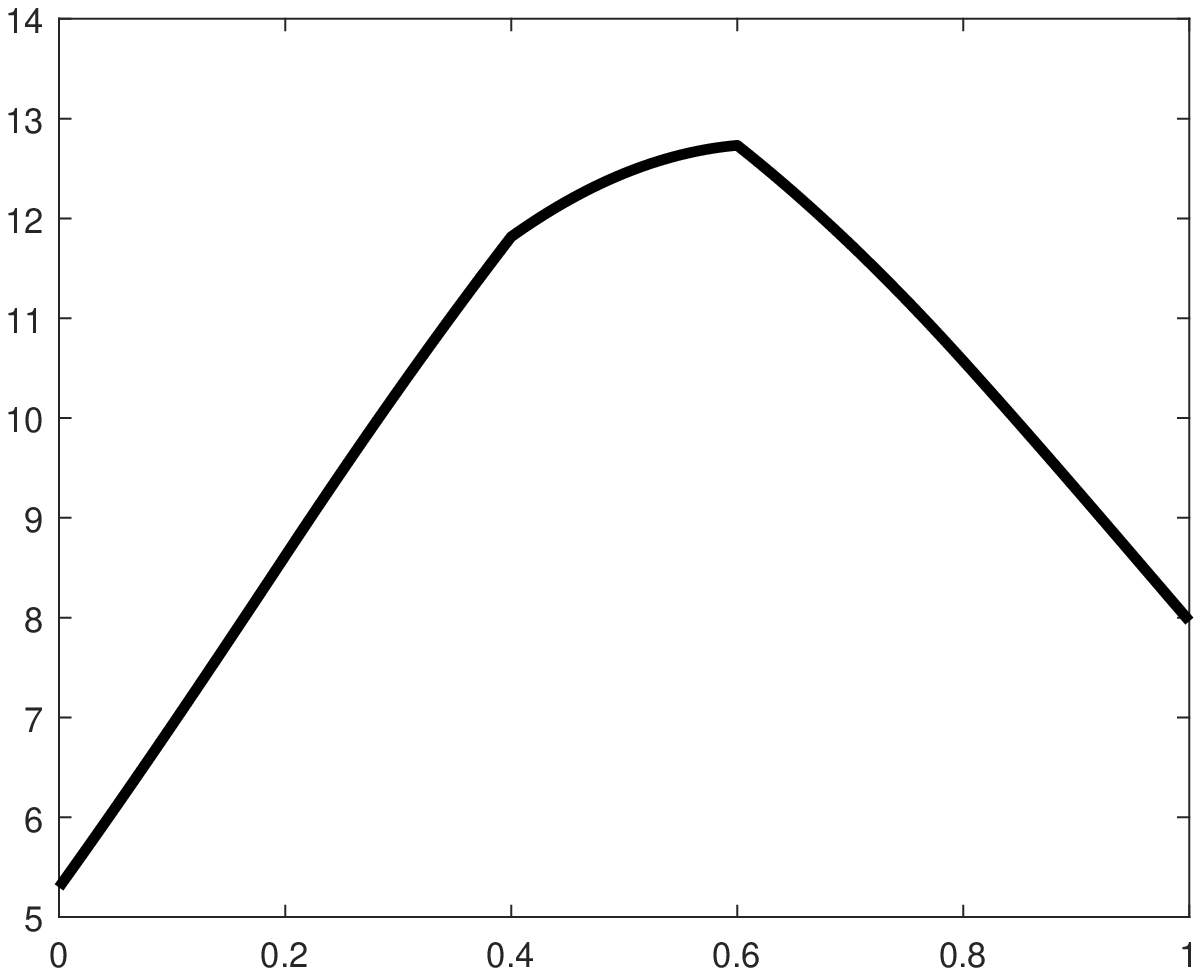}}
\caption{Example 2: The right hand side function and the approximation solution.}\label{fig:2}
\end{figure}
Since the exact solution $u(x)$ is not known for this problem we compute errors using the solution on finer meshes as approximation of the true solution. We first use the proposed DG method on uniform meshes. The right hand side function $b(x)$ and the approximation $u_{h}$ with $h=0.00625$ are plotted in Figure \ref{fig:2}. Although $b(x)$ is in $C^{\infty}(0,1)$, it has two discontinuous points $x=0$ and $x=1$, which causes the discontinuity in the first derivative of $u$ at $x=\delta=0.4$ and $x=1-\delta=0.6$. From Table \ref{table:ex2:uni} it is seen that the method has optimal convergence rates for errors in $L^2$ and $H^1$ norms. The spectral condition numbers of the stiffness matrices for the method behave similarly as Example 1. Since the results for the spectral condition numbers of the stiffness matrices are all similar in the rest of the numerical tests, we no longer list them to avoid repetitions.
\begin{table}
\caption{Example 2: Errors in $L^2$ and $H^1$ norms, corresponding convergence rates and spectral condition numbers on uniform meshes}\label{table:ex2:uni}
\begin{tabular}{llllll}
\hline\noalign{\smallskip}
 $\delta/h$     &  4   & 8 & 16 & 32 & 64\\
 \noalign{\smallskip}\hline\noalign{\smallskip}
$\|u-u_{h}\|$  & 7.54e-3  & 1.79e-3 & 4.38e-4 & 1.08e-4 & 2.69e-5 \\
Rate    &  -- & 2.0717 & 2.0349  & 2.0170 & 2.0084 \\
$\|u'-u'_{h}\|$   & 4.90e-1  & 2.41e-1 & 1.19e-1 & 5.94e-2 & 2.97e-2 \\
Rate    &  -- & 1.0259 & 1.0114  & 1.0053 & 1.0026 \\
Cond & 5.8875 & 6.7743 & 6.9926  & 7.0294 & 7.0282 \\
\noalign{\smallskip}\hline
\end{tabular}
\end{table}
Next, we use the non-uniform meshes (\ref{nonuni_grid}) to solve the problem. In this example the true solution $u(x)\in H^{1.5-\epsilon}(\Omega)$ for arbitrary small positive $\epsilon$. So for general non-uniform meshes, we expect the convergence rates for errors in $L^2$ ($H^1$) norm to be at most $1.5$ ($0.5$). This fact is verified in Table \ref{table:ex2:non-uni}. However, since we know the two discontinuous points of $u'(x)$, if they are selected as mesh grids, optimal convergence rates could be recovered. To be specific, random disturbances are added to the original mesh grids except $\delta$ and $1-\delta$. The corresponding results are shown in Table \ref{table:ex2:non-uni-grid}. That is the convergence rates for errors in $L^2$ and $H^1$ norms are $2$ and $1$, respectively, which is consistent with the theoretical result (\ref{Error_1D}). We then re-examine the computation on the meshes used in Table \ref{table:ex2:non-uni}. The convergence rates, however, are not necessarily similar, which is different with Example 1. To be specific, if the perturbation for the points $\delta$ and $1-\delta$ is large in absolute value, such as $0.1h$, the convergence rates remain similar to Table \ref{table:ex2:non-uni}. On the other hand, if the perturbation for the two points is small or close to zero, the convergence rates of the test are similar to Table \ref{table:ex2:non-uni-grid}.

\begin{table}
\caption{Example 2: Errors in $L^2$ and $H^1$ norms, and convergence rates on non-uniform meshes (\ref{nonuni_grid})}\label{table:ex2:non-uni}
\begin{tabular}{llllll}
\hline\noalign{\smallskip}
 $\delta/h$     &  4   & 8 & 16 & 32 & 64\\
 \noalign{\smallskip}\hline\noalign{\smallskip}
 $\|u-u_{h}\|$  & 9.34e-3  & 3.41e-3 & 1.24e-3 & 4.26e-4 & 1.59e-4 \\
Rate    &  -- & 1.4519 & 1.4611  & 1.5411 & 1.4222 \\
$\|u'-u'_{h}\|$   & 3.95e-1  & 2.66e-1 & 2.13e-1 & 1.42e-1 & 1.21e-1 \\
Rate    &  -- & 0.5712 & 0.3204  & 0.5815 & 0.3432 \\
\noalign{\smallskip}\hline
\end{tabular}
\end{table}

\begin{table}
\caption{Example 2: Errors in $L^2$ and $H^1$ norms, and convergence rates on non-uniform meshes (\ref{nonuni_grid}) with $\delta$ and $1-\delta$ as grids}\label{table:ex2:non-uni-grid}
\begin{tabular}{llllll}
\hline\noalign{\smallskip}
 $\delta/h$     &  4   & 8 & 16 & 32 & 64\\
 \noalign{\smallskip}\hline\noalign{\smallskip}
$\|u-u_{h}\|$  & 6.98e-3  & 1.72e-3 & 4.10e-4 & 1.01e-4 & 2.49e-5 \\
Rate    &  -- & 2.0229 & 2.0661  & 2.0243 & 2.0180 \\
$\|u'-u'_{h}\|$   & 4.17e-1  & 2.14e-1 & 1.01e-1 & 4.74e-2 & 2.42e-2 \\
Rate    &  -- & 0.9646 & 1.0781  & 1.0941 & 0.9671 \\
\noalign{\smallskip}\hline
\end{tabular}
\end{table}

\subsection{Non-constant kernel function}
We then consider the kernel function
\begin{equation}\label{Lin_ker}
     \gamma(s)=\left \{
     \begin{array}{ll}
     (1-|s|/\delta)/\delta, & |s|\leq \delta,\\
     0, & |s|>\delta.
     \end{array}
     \right .
\end{equation}
Obviously the first derivative of $\gamma$ is discontinuous at points $\pm\delta$, if $b$ is discontinuous at $x=0$ or $x=1$, thus $u$ will likely be discontinuous in its second derivative at $x=\delta$ or $x=1-\delta$. This is the case in Example 3 where the regularity pick-up is only second order and could not be further improved. That is, since $b$ is discontinuous at $x=0$ and $x=1$, the second derivative of $u$ is
is discontinuous at $x=\delta$ and $x=1-\delta$.

\noindent{\bf Example 3.} We consider (\ref{1D_prob}) with kernel function (\ref{Lin_ker}) and $b(x)=0.01e^{6x}$. The proposed DG method in Section \ref{section:DG_method} is used to discretize it with $\delta=0.4$.
\begin{figure}
\subfloat[$u_{h}(x)$]{\includegraphics[width=6cm]{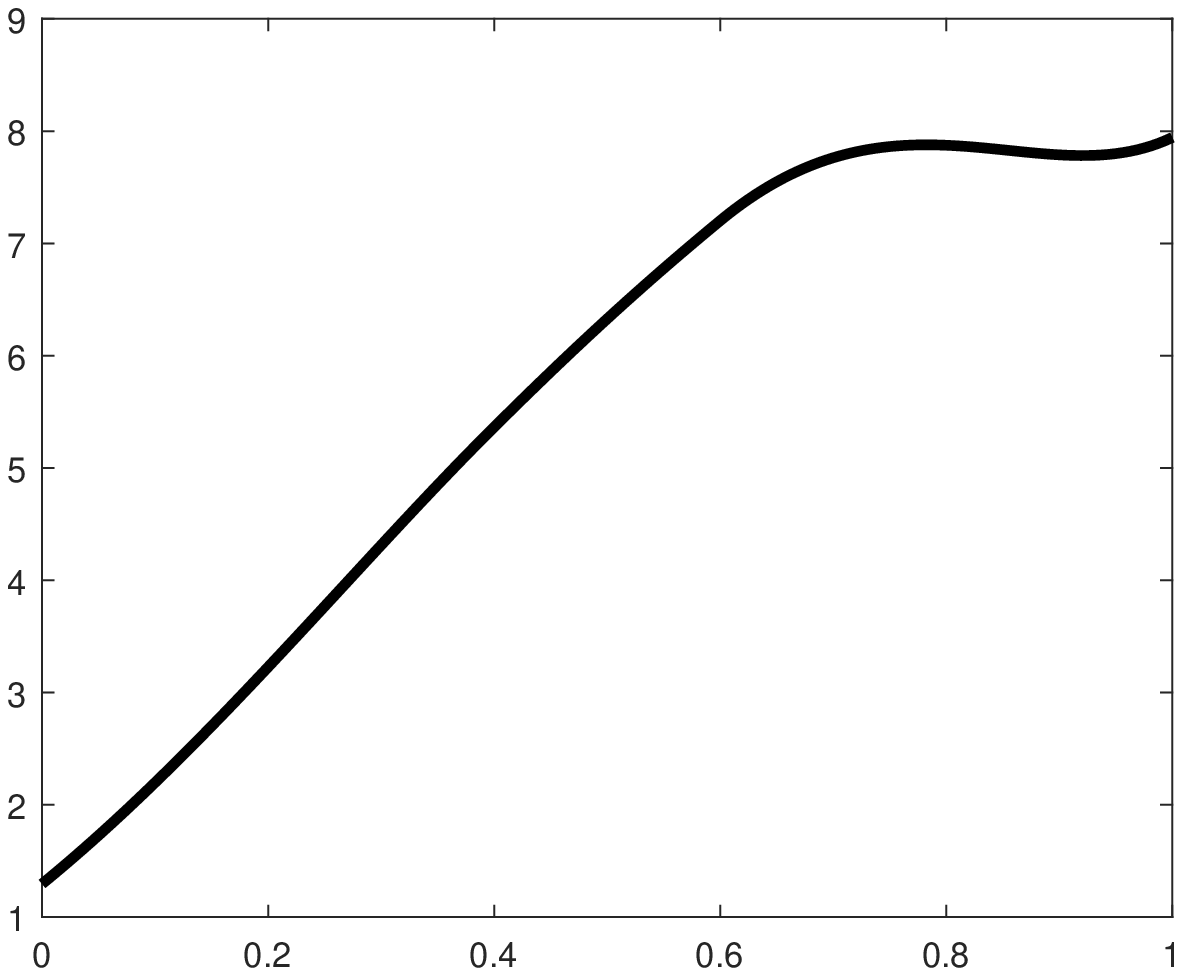}}
\subfloat[$u_{h}'(x)$]{\includegraphics[width=6cm]{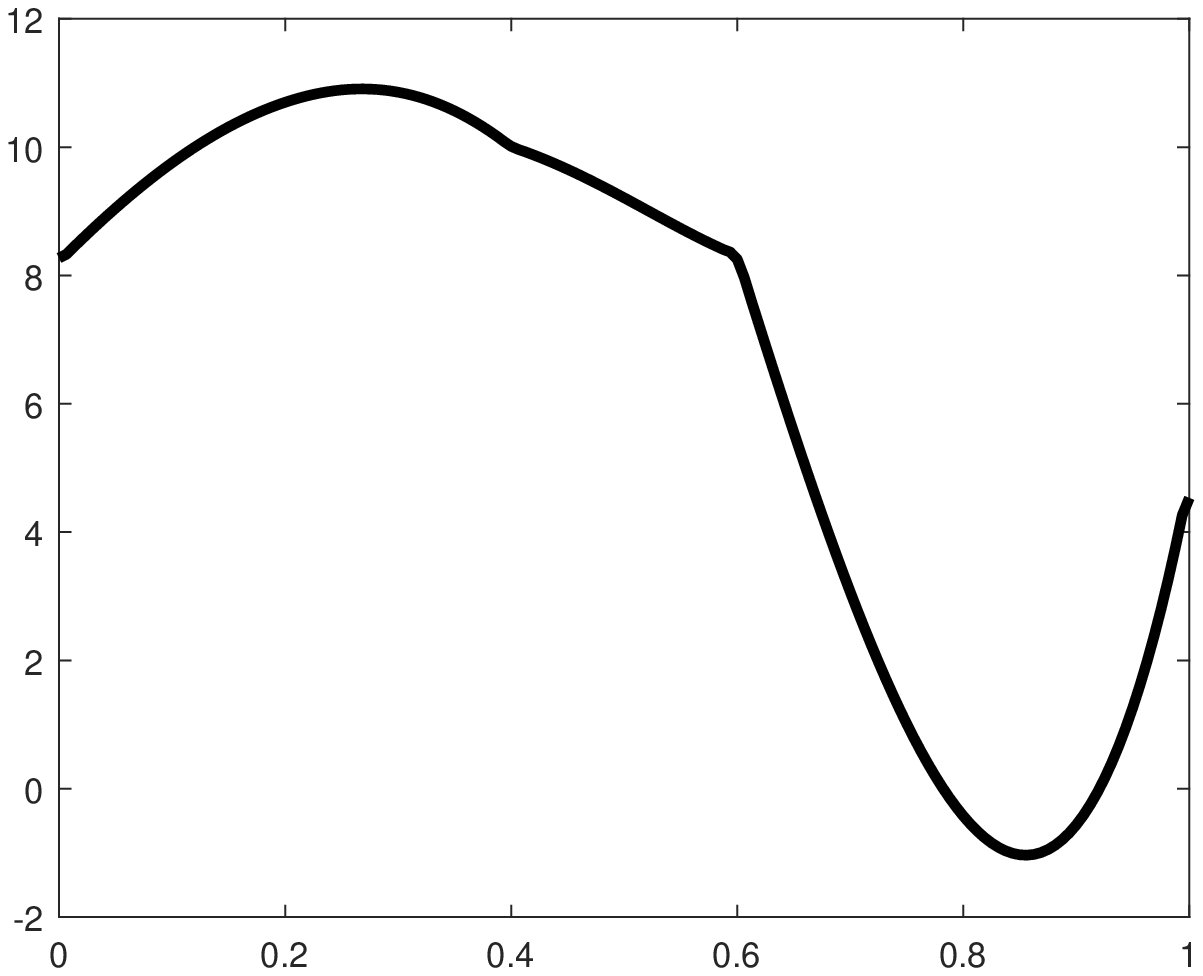}}
\caption{Example 3: The approximation solution and its derivative.}\label{fig:3}
\end{figure}
As in Example 2, since we do not know the exact solution $u(x)$, errors are computed using the solution on finer meshes as approximation of the true solution. We first implement the proposed DG method on uniform meshes. The approximation $u_{h}(x)$ with $h=0.00625$ are plotted in Figure \ref{fig:3}. To see the discontinuity of the second derivative, the first derivative of $u_{h}(x)$ is also plotted in Figure \ref{fig:3}. Although $b(x)\in C^{\infty}(0,1)$, however, $b(x)$ has two discontinuous points $x=0$ and $x=1$, which causes the discontinuity for the second derivative of $u$ at $x=\delta=0.4$ and $x=1-\delta=0.6$. From Table \ref{table:ex3:uni} it is seen that optimal convergence rates for errors in $L^2$ and $H^1$ norms are achieved.

\begin{table}
\caption{Example 3: Errors in $L^2$ and $H^1$ norms, and convergence rates on uniform meshes}\label{table:ex3:uni}
\begin{tabular}{llllll}
\hline\noalign{\smallskip}
 $\delta/h$     &  4   & 8 & 16 & 32 & 64\\
 \noalign{\smallskip}\hline\noalign{\smallskip}
$\|u-u_{h}\|$  &  1.18e-2 & 2.74e-3  & 6.61e-4 & 1.62e-4 & 4.03e-5 \\
Rate           &  -- & 2.1019 & 2.0525  & 2.0246 & 2.0116 \\
$\|u'-u'_{h}\|$ & 7.20e-1  & 3.60e-1 & 1.79e-1 & 8.90e-2 & 4.44e-2 \\
Rate    &  -- & 1.0018 & 1.0089  & 1.0056 & 1.0030 \\
\noalign{\smallskip}\hline
\end{tabular}
\end{table}
Next, we use non-uniform meshes (\ref{nonuni_grid}) to solve the problem. In this example the true solution satisfies $u(x)\in H^{2.5-\epsilon}(\Omega)$ for arbitrary small positive $\epsilon$. So for general non-uniform meshes the theoretical convergence rates for errors in $L^2$ and $H^1$ norms are all optimal. This is indeed verified in Table \ref{table:ex3:non-uni}.
\begin{table}
\caption{Example 3: Errors in $L^2$ and $H^1$ norms, and convergence rates on nonuniform meshes (\ref{nonuni_grid})}\label{table:ex3:non-uni}
\begin{tabular}{llllll}
\hline\noalign{\smallskip}
 $\delta/h$     &  4   & 8 & 16 & 32 & 64\\
 \noalign{\smallskip}\hline\noalign{\smallskip}
$\|u-u_{h}\|$  &  1.11e-2 & 2.61e-3  & 6.08e-4 & 1.51e-4 & 3.76e-5 \\
Rate           &  -- & 2.0891 & 2.0993  & 2.0143 & 1.9985 \\
$\|u'-u'_{h}\|$   & 6.70e-1  & 3.32e-1 & 1.51e-1 & 7.47e-2 & 3.67e-2 \\
Rate    &  -- & 1.0128 & 1.1331  & 1.0189 & 1.0261 \\
\noalign{\smallskip}\hline
\end{tabular}
\end{table}

\begin{remark}
We have considered the homogeneous problem (\ref{nonlocal_diff_homo}) with a right hand side function $b({\bf x})\in C(\Omega)$. The inhomogeneous problem (\ref{nonlocal_diffusion}) could be converted to a homogeneous one like (\ref{nonlocal_diff_homo}) via the transforms (\ref{btrans}) and (\ref{utrans}).
\end{remark}

\section{Conclusion}
We propose a new kind of DG method in this paper to numerically solve the nonlocal models with integrable kernels. The existed references tell us that if the right hand side function and the volume constraint function are in $L^{2}(\Omega)$ and $L^{2}(\Omega_{I})$, respectively, then the true solution of that nonlocal model also belongs to $L^{2}(\Omega\cup\Omega_{I})$. Such a general result makes the numerical approximation difficult to operate, or easy to operate but not so efficiently. To make the approximation easier and more efficient simultaneously, we first convert the original nonhomogeneous problem with right hand side function in $L^{2}(\Omega)$ to a homogeneous problem with right hand side function continuous in $\Omega$. Then we analyze the structure of true solution of the homogeneous problem, especially for higher dimensional cases. The main result is, this kind of problem often encounters the discontinuity across the boundary $\partial\Omega$, thus possibly causes the discontinuity of first or second derivative (perhaps higher order derivatives, depending on the smoothness of the kernels) across $\partial\Omega_{1}$. Based on this observation, an appropriate DG method is proposed which has some good properties, such as, the matrix of the algebraic system is symmetrical positive definite and has almost constant spectral condition number independent of the mesh size, the method is asymptotically compatible and uses less degrees of freedom compared with direct use of DG method. Moreover, it has optimal convergence rate for 1D case under very weak assumption, and the almost optimal convergence rate for 2D case under mild assumption. This is the essential improvement over the existed theory for standard approximation like continuous FEM.

The error and condition number estimates for the method are proven in Section \ref{section:Theoretical considerations} for any dimensional case. However, the numerical experiments are implemented just for 1D case in Section \ref{section:numerical_experiment}. This is because the implementation of a FEM for a nonlocal problem involves calculation of double integral which is rather complicated for higher dimensional cases. To be more specific, since the kernels we discussed are supported on a ball of radius $\delta$, when the inner integral of the double integral is written by a sum of some components (also integrals) over the elements which overlap with the support of the kernel, the integral region for some components is strictly contained within the support of the kernel. For such special integrals, some specifically designed quadrature rule should be used to obtain good accuracy. Authors in \cite{xu2015multiscale,xu2016multiscale} discuss this issue in details and use the quadrature rule for multiscale implementation for PD models. Interested readers are referred to them and references cited therein.

\bibliographystyle{plain}
\bibliography{references}

\end{document}